\newcommand{\properideal}{%
\mathrel{\ooalign{$\lneq$\cr\raise.22ex\hbox{$\lhd$}\cr}}}
\newcommand{\properring}{%
\mathrel{\ooalign{$\gneq$\cr\raise.22ex\hbox{$\rhd$}\cr}}}
\newcommand{\Q}{{\mathbb Q}}
\newcommand{\Z}{{\mathbb Z}}
\newtheorem{theorem}{Theorem}[section]
\newtheorem{corollary}{Corollary}[theorem]
\newtheorem{lemma}[theorem]{Lemma}
\newtheorem{proposition}[theorem]{Proposition}
\newtheorem{definition}[theorem]{Definition}
\newtheorem{example}[theorem]{Example}
\newtheorem{remark}{Remark}[theorem]
\numberwithin{equation}{section}
\theoremstyle{definition}
\numberwithin{equation}{subsubsection}
\title[On Variants of Inverse Cluster Size Problem \& General Magnification]{On Variants of Inverse Cluster Size Problem \& General Magnification}
\author{Shubham Jaiswal, M Krithika, P Vanchinathan}
\address{Department of Mathematics IIT Bombay, Powai, Mumbai 400 076, India.}
\email{sjaiswal@math.iitb.ac.in}
\address{School of Advanced Sciences, VIT Chennai,
Vandalur-Kelambakkam Road, Chennai 600 127, India}
\email{krithika.m2020@vitstudent.ac.in}
\email{vanchinathan@gmail.com}
\subjclass[2020]{11R04, 11R21, 11R32, 12F05, 12F10, 20B35, 20F16}
\date{February 27, 2026.}
\keywords{inverse cluster size problem, primitive \& general primitive extensions, totally real number fields}
\begin{document}

\begin{abstract}
    In this article we establish certain variants of the Inverse Cluster Size problem. We introduce the notion of primitive extensions and establish the Primitive variant of the problem. Precisely, we prove the existence of primitive extensions over number fields of any given degree and cluster size less than the degree. We also introduce the notions of Strong and Weak General Magnification and the notion of general primitive extensions. We establish some interesting cases of the General primitive variant of the problem. We also recall the notion of totally real number fields and resolve the Totally real variant of the problem completely.
\end{abstract}

\maketitle

\section{Introduction}

 In Galois theory, a basic result is that for a finite separable extension $L/K$ the number of automorphisms of
$L$ fixing  $K$ is equal to the degree $[L:K]$
if and only if $L/K$ is a Galois extension. In general, we have the notion of cluster size of $L/K$, $r_K(L)$ which is the number of automorphisms of
$L$ fixing  $K$ (See \cite{Bhagwat_2025}, \cite{krithika2023root}, \cite{perlis2004roots}). It is not hard to see that the cluster size will actually be a divisor of the degree $[L:K]$.

\smallskip

After proving Lagrange's theorem in group theory, one naturally wonders about the converse and Cauchy's theorem and Sylow's theorem are such attempts at the converse. In the same spirit, a similar converse problem (which is called Inverse cluster size problem in \cite{Bhagwat_2025}) can be asked : Given a perfect field $K$ and integers $n>2$ and $r\mid n$, does there exist an extension $L/K$ of degree $n$ with cluster size $r$ ?\smallskip

The Inverse cluster size problem for $K=\mathbb{Q}$ was established in the pioneering work of Perlis \cite{perlis2004roots}, \cite{perlisroots}. The Inverse cluster size problem for number fields was resolved completely by the first author and Bhagwat in \cite{Bhagwat_2025} thus generalizing the result of Perlis for any number field. They used a similar approach using Shafarevich's theorem and Hilbertian field theory along with certain Galois-theoretic lemmas which extended the result from $\mathbb{Q}$ to any number field $K$. This generalization also improved on the generalization proved previously by the second and third author in \cite{krithika2023root}, which was proved by a process called Cluster magnification, reducing the problem to solving for the case $r=1$, but which excluded certain cases, namely $n = 2r$, where $r$ is odd for $\mathbb{Q}$ and $n = 2r$ for any number field $K\neq \mathbb{Q}$.

\smallskip

In this article we establish certain variants of the Inverse cluster size problem for number fields. In Section \ref{primitive}, the notion of an extension being primitive is introduced which means that the extension is not obtained from a nontrivial process of cluster magnification. We recall the notions of unique descending chains and unique ascending chains for extensions introduced in \cite{Bhagwat_2025}. We then establish a criterion in Theorem \ref{unique chain and primitive} for an extension to be primitive in terms of unique chains which serves as a major ingredient in the proof of our Theorem \ref{Prim inv clus} which is as follows.

\begin{theorem}
    (Primitive Inverse Cluster Size Problem for Number Fields) 
    
    Let $K$ be a number field. Let $n>2$ and $r|n$ and $r<n$. Then there exists a primitive extension $L/K$ of degree $n$ with cluster size $r_K(L)=r$.

\end{theorem}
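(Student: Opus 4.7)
The plan is to split the proof into a group-theoretic construction followed by a Galois realization step. By Theorem \ref{unique chain and primitive}, it suffices to exhibit a finite group $G$ and a subgroup $H \le G$ such that: (i) $[G:H] = n$; (ii) $[N_G(H):H] = r$; and (iii) the lattice of subgroups of $G$ containing $H$ satisfies the unique ascending/descending chain property that characterises primitivity. Any realisation of $G$ as $\mathrm{Gal}(\tilde L/K)$ with $\mathrm{Gal}(\tilde L/L) = H$ then yields the desired extension $L = \tilde L^H$, which has degree $n$, cluster size $r$ and is primitive by the criterion in Theorem \ref{unique chain and primitive}.

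For the construction, write $n = rm$ with $m = n/r \ge 2$. The strategy is to start from the pair $(G,H)$ used to settle the non-primitive version of the Inverse Cluster Size Problem in \cite{Bhagwat_2025}, which already ensures (i) and (ii), and refine it so that (iii) also holds. A natural family of candidates is given by metacyclic groups, or more generally iterated semidirect products of cyclic groups, $G = C_{a_1} \rtimes (C_{a_2} \rtimes (\cdots))$, with $H$ chosen so that $N_G(H)/H$ is of order $r$ and the overgroups of $H$ in $G$ form a single chain. When $r$ and $m$ are prime powers the construction can be written down directly; in the general case the pair $(G,H)$ is assembled by induction on the number of prime divisors of $n$, using one primitive building block per prime and gluing them via semidirect products chosen so that the chain structure is preserved.

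The realisation step is then handled by Shafarevich's theorem: every group constructed above is solvable, so there exists a Galois extension $\tilde L/K$ with $\mathrm{Gal}(\tilde L/K) \cong G$ over any number field $K$. Setting $L = \tilde L^H$ completes the proof. The main obstacle is the tension between (ii) and (iii): the most obvious way to realise cluster size $r$ is to take a direct product $G = G_0 \times C_r$ with $H = H_0 \times \{e\}$, but this produces a ``rectangular'' sublattice of subgroups between $H$ and $G$ and hence multiple chains, violating (iii). Avoiding this forces the action in the semidirect product to be non-trivial, and the delicate point will be verifying the unique chain property once the action is twisted; I expect the argument to branch into cases depending on $\gcd(r, m)$ and on the prime factorisation of $r$, with the case $m = 2$ requiring separate treatment since self-normalising point stabilisers are scarce in degree two.
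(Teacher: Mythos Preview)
Your proposal is a sketch rather than a proof, and it misses the main point: no refinement of the construction from \cite{Bhagwat_2025} is needed. For $r>1$ the paper simply reuses the \emph{same} solvable group $G=(\Z/r\Z)^{s}\rtimes\Z/s\Z$ (with $s=n/r$) and the same point stabiliser $H\cong(\Z/r\Z)^{s-1}$ from Theorem~3.1.1 of \cite{Bhagwat_2025}; the new observation is that Theorem~7.3.4 of \cite{Bhagwat_2025} already computes the unique descending and ascending chains for this extension to be $L\supsetneq N\supsetneq K$ and $K\subsetneq F\subsetneq L$ with $N=F$, so Theorem~\ref{unique chain and primitive} applies directly. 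There is no tension between (ii) and (iii) to resolve, no induction on prime divisors, and no case analysis on $\gcd(r,m)$: the wreath-type construction you are starting from already has the single-chain property, and the whole content of the primitive version is the criterion in Theorem~\ref{unique chain and primitive}.

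Your plan also breaks down at $r=1$. There the chain criterion cannot be used at all: for the $\mathfrak S_n$-extension the unique ascending chain is the singleton $K$ (see Remark~\ref{Snremark}), so no intermediate coincidence exists. The paper handles $r=1$ separately, via Proposition~\ref{simple prop}, by noting that the stabiliser $\mathfrak S_{n-1}$ is not contained in any proper normal subgroup of $\mathfrak S_n$. Your proposal does not isolate this case and would have to invent a different group to force a nontrivial chain coincidence with $r_K(L)=1$, which is unnecessary work.
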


In Section \ref{General}, the notions of Strong and Weak General Magnification are introduced and studied inspired from the notions of Strong and Weak Cluster Magnification in \cite{Bhagwat_2025} and motivated from results in \cite{krithika2025inflated}. In Section \ref{general primitive}, the notion of an extension being general primitive is introduced which means that the extension is not obtained from a nontrivial process of general magnification. We then go on to establish some interesting cases of the General Primitive Inverse Cluster Size Problem which is encapsulated in the following result.

\begin{theorem}\label{gen prim intro}

   Consider $(K,n,r)$ with $K$ being a number field, $n>2$ and $r|n$ and $r<n$. Then there exists a general primitive extension $L/K$ of degree $n$ with cluster size $r_K(L)=r$ in the following cases:

    \begin{enumerate}

\item $K$ is any number field.\smallskip

\begin{enumerate}

\item $(K,\ ^nP_k, k! )$ where $n>2$ and $1 \leq k \leq n-2$. In particular $(K, n, 1)$ where $n>2$.\smallskip

\item $(K, 2n, 2)$ where $n\geq 2$.
\smallskip

\item $(K, 4 {n\choose k}, 4)$ where $n>2$ and $1<k<n-1$.

\smallskip

\item $(K,pr,r)$ where $p$ is a prime and $r\ |\ p-1$ and $p-1>2r$ and either (i) $p\equiv 1\pmod 4$ or (ii) $p\equiv 3\pmod 4$ and $r$ is odd.

\smallskip

\item $(K,2n,n)$ where $n>2$ is odd.

\end{enumerate}
\smallskip

        \item $(\mathbb{Q},(p+1)r,r)$ where $p$ is a prime and $2r\ |\ p-1$ and $r\geq 3$.

    \end{enumerate}
\end{theorem}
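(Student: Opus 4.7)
My plan for each case is to exhibit a pair $(G, H)$ with $G$ a finite group and $H \leq G$ satisfying $[G:H] = n$ and $[N_G(H):H] = r$, verify that the coset action of $G$ on $G/H$ satisfies the general primitive criterion from Section \ref{general primitive}, and finally realize $(G,H)$ as a Galois pair over $K$: produce a Galois extension $\tilde L/K$ with $\mathrm{Gal}(\tilde L/K) \cong G$ such that the fixed field $L = \tilde L^H$ furnishes the required general primitive extension of degree $n$ and cluster size $r$.

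For case $1(a)$ I would take $G = S_n$ in its action on ordered $k$-tuples of distinct letters from $\{1,\ldots,n\}$; the stabilizer $H$ of $(1,\ldots,k)$ is $S_{n-k}$ on the remaining letters, and its normalizer is $S_k \times S_{n-k}$, giving degree ${}^nP_k$ and cluster size $k!$. For case $1(b)$ the candidate is the wreath product $G = S_2 \wr S_n$ acting imprimitively on $2n$ points preserving a pair structure; the one-point stabilizer has index $2n$ and the pair stabilizer lies above it with index $2$. For case $1(c)$ I would combine the previous two ideas, producing a group tailored so that $[G:H] = 4\binom{n}{k}$ and $[N_G(H):H] = 4$ (for instance, $G = S_2 \wr S_n$ or $S_n \times V_4$ in an appropriate doubled action on $k$-subsets). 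For case $1(d)$ the natural candidate is the affine group $G = \F_p \rtimes \F_p^\times$ with $H$ the unique subgroup of $\F_p^\times$ of order $(p-1)/r$: a direct computation using the formula $(t,a)(0,h)(t,a)^{-1} = (t(1-h), h)$ shows $N_G(H) = \F_p^\times$, hence $[G:H] = pr$ and $[N_G(H):H] = r$. The arithmetic hypotheses $p-1 > 2r$ together with the residue/parity conditions on $p$ and $r$ are precisely what force $H$ to sit in general primitive position inside $G$.

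For part $(2)$ the natural candidate is $G = \PGL_2(\F_p)$ acting on $\bbP^1(\F_p)$ (of size $p+1$). A subgroup $H$ of index $(p+1)r$ inside a Borel subgroup, whose normalizer has index $p+1$ in $G$, will furnish the correct cluster-size datum; the hypothesis $2r \mid p-1$ is what permits $H$ to be chosen compatibly inside the split torus, and $r \geq 3$ eliminates degenerate small cases. The restriction to $K = \Q$ reflects the state of inverse Galois theory: $\PGL_2(\F_p)$ is known to be realizable over $\Q$ (e.g.\ by Shih's construction or subsequent refinements), while the analogous assertion over an arbitrary number field is not available in the generality needed.

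Having fixed $(G,H)$, the Galois realization over $K$ uses Hilbert irreducibility (for symmetric groups and their wreath products, giving cases $1(a),(b),(c)$), Shafarevich's theorem on solvable groups (for case $1(d)$), and the explicit $\PGL_2$ realizations over $\Q$ (for part $(2)$). The cluster-size identity $r_K(L) = r$ then follows directly from $[N_G(H):H] = r$. The main obstacle I expect is the verification of general primitivity: in cases $1(c)$ and $1(d)$ the subgroup lattice between $H$ and $G$ contains several candidate intermediate subgroups which would correspond to non-trivial general magnifications of $L/K$, and one must rule these out using the criterion from Theorem \ref{unique chain and primitive} adapted to the general primitive setting. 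The congruence and parity hypotheses on $p$ in case $1(d)$, and the combinatorial constraints $1 < k < n-1$ in case $1(c)$, are precisely designed to collapse the unwanted intermediate subgroups and guarantee the uniqueness of the relevant ascending/descending chains.
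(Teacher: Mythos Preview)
Your plan has the right overall shape (choose $(G,H)$, compute $[N_G(H):H]$, realize $G$ over $K$), and Cases 1(a) and 1(d) match the paper's constructions. But there are two genuine gaps.

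First, your mechanism for verifying general primitivity is wrong. You propose to ``rule these out using the criterion from Theorem \ref{unique chain and primitive} adapted to the general primitive setting.'' That theorem only yields \emph{primitivity}; the paper explicitly remarks (just before the case analysis) that the unique-chain criterion does \emph{not} transfer to general primitivity, because of Theorem \ref{uniq chain coinc GM}. The paper's actual tool is the normal-subgroup structure of $G$ via Proposition \ref{gen prim prop} (or directly Proposition \ref{SGM criterion}): if $G$ has at most one proper nontrivial normal subgroup, every extension with Galois group $G$ is automatically general primitive. This is why the paper's group choices are all close to simple.

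Second, and as a consequence, your specific groups in 1(b), 1(c), and (2) are problematic. For 1(b) and 1(c) the paper stays inside $G=\mathfrak S_n$, taking $H=\mathfrak A_k\times\mathfrak A_{n-k}$ so that $N_G(H)=\mathfrak S_k\times\mathfrak S_{n-k}$; the degree is $4\binom{n}{k}$ (resp.\ $2n$ when $k=1$) and general primitivity is immediate from the normal-subgroup lattice of $\mathfrak S_n$. Your candidate $S_n\times V_4$ acting on $k$-subsets times the regular $V_4$-set has point stabilizer $(S_k\times S_{n-k})\times\{1\}$, which visibly factors as in Proposition \ref{SGM criterion}, so that extension is \emph{not} general primitive. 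The wreath product $S_2\wr S_n$ also has a rich normal-subgroup lattice (e.g.\ $S_2\wr S_3\cong C_2\times S_4$), so you cannot appeal to simplicity and would need a separate, delicate argument. For part (2) the paper uses $G=\PSL_2(\F_p)$, which is simple for $p\ge 5$, so general primitivity comes for free; your $\PGL_2(\F_p)$ has $\PSL_2(\F_p)$ as an index-$2$ normal subgroup and the simplicity shortcut is lost. The realizability input is Zywina's theorem that $\PSL_2(\F_p)$ occurs over $\Q$ for all $p\ge 5$, and the hypothesis $2r\mid p-1$ is what lets the relevant subgroup $H$ of the Borel descend from $\SL_2$ to $\PSL_2$.
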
\smallskip

In Section \ref{tot real}, we recall the notion of totally real number fields and resolve the Totally Real Variant of the problem completely which is as follows.

\begin{theorem} (Totally Real Inverse Cluster Size Problem) 

Let $K$ be a totally real number field and $n>2$ and $r\mid n$. There exists an $L/K$ where $L$ is totally real such that $[L:K]=n$ and $r_K(L) = r$. 
\smallskip

Furthermore we show existence of general primitive $L/K$ (hence also primitive) where $L$ is totally real number field for the following cases of $(K,n,r)$ in Theorem \ref{gen prim intro}: Cases 1 (a), (b), (c) and Case $(K,2p,p)$ where $p>2$ is an odd prime.\smallskip



Additionally we show existence of primitive $L/K$ where $L$ is totally real number field whenever we have $1<r<n$ for $n$ odd.

\end{theorem}










\section{Primitive Inverse Cluster Size Problem}\label{primitive}

Let $K$ be a perfect field. We fix an algebraic closure $\bar{K}$ once and for all and work with finite extensions of $K$ contained in $\bar K$. Let $L/K$ be a finite extension and $\tilde{L}$ be its Galois closure inside $\bar{K}$. Let $G = {{\rm Gal}}(\tilde{L}/K)$ and $H= {{\rm Gal}}(\tilde{L}/L)$. We have the notion of cluster size of $L/K$, $r_K(L)$ which is the number of automorphisms of
$L$ fixing  $K$ which turns out to be the quantity $[N_G(H):H]$ and thus a divisor of the degree $[L:K]=[G:H]$ (See Section 2.1 in \cite{Bhagwat_2025}). We also have the notion of number of clusters of $L/K$, $s_K(L)$ which is $[G:N_G(H)]$ which is also the number of distinct fields inside $\bar{K}$ isomorphic to $L$ over $K$ (See Section 3.2 in \cite{Bhagwat_2025}).

\subsection{Primitive Extensions}

The following is Definition 4.1.1 in \cite{Bhagwat_2025}.

\begin{definition}

\label{SCM}
A finite extension $M/K$ is said to be obtained by strong cluster magnification from a subextension $L/K$ if we have the following:
\smallskip
 
 \begin{enumerate}
 \item  $[L:K] = n > 2,$ 
 \smallskip
      
\item there exists a finite Galois extension $F/K$ such that the Galois closure $\tilde{L}$ of $L$ in $\bar{K}$ and $F$ are linearly disjoint over $K$. \smallskip

\item $LF=M$.\smallskip

 \end{enumerate}

The quantity $[F:K]$ is called the magnification factor and denoted by $d$. The magnification is called trivial if $d=1$ and nontrivial otherwise.

\end{definition}

The following is reformulation of Definition 4.2.4 in \cite{Bhagwat_2025} for field extensions (This notion for polynomials occurs in \cite{krithika2023root} as well).

\begin{definition}

    An extension $L/K$ is called primitive if it is not obtained by a nontrivial strong cluster magnification from any subextension over $K$. 
\end{definition}

\begin{proposition} \label{simple prop}
Consider $L/K$. Let $G={{\rm Gal}}(\tilde{L}/K)$ and $H={{\rm Gal}}(\tilde{L}/L)$.
     If $H$ is not contained in any proper normal subgroup of $G$ then $L/K$ is primitive. In particular, if $G$ is simple then $L/K$ is primitive. \end{proposition}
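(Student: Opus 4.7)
The plan is to translate the definition of nontrivial strong cluster magnification into the language of subgroups of $G$ and to observe that its existence forces a proper normal subgroup of $G$ to contain $H$.

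Suppose, for contradiction, that $L/K$ is obtained by a nontrivial strong cluster magnification from some subextension $L'/K$. By the definition, there is a Galois extension $F/K$, linearly disjoint from $\tilde{L'}$ over $K$, with $L'F=L$ and $[F:K]>1$. The crucial (and essentially only) observation is that $F\subseteq L'F=L$, so $F$ is a subfield of $L$ that is Galois over $K$. By the Galois correspondence applied to $\tilde{L}/K$, Galois subextensions of $L/K$ correspond bijectively to normal subgroups of $G$ containing $H$; write $F=\tilde{L}^{N}$ with $N\lhd G$ and $H\subseteq N$. The nontriviality $[F:K]=[G:N]>1$ forces $N\neq G$, so $N$ is a proper normal subgroup of $G$ containing $H$, contradicting the hypothesis. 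This proves the first assertion.

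For the ``in particular'' clause, assume $G$ is simple. If $H$ is nontrivial, then $H$ cannot sit inside the only proper normal subgroup $\{e\}$, and the first part applies directly. If instead $H=\{e\}$, then $L=\tilde{L}$ is itself Galois over $K$ with group $G$; in a putative nontrivial SCM $L=L'F$, the Galois subfield $F$ of $L$ corresponds to a normal subgroup of $G$, and since $F\neq K$ and $G$ is simple one must have $F=L$. But then linear disjointness of $\tilde{L'}\subseteq \tilde{L}=F$ from $F$ over $K$ forces $\tilde{L'}=K$, hence $[L':K]=1$, contradicting the requirement $[L':K]>2$ in the definition of SCM.

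No step is a genuine obstacle; the argument is essentially a careful unwinding of the definitions. The only point deserving care is the containment $F\subseteq L$, which follows immediately from $F\subseteq L'F=L$ and is precisely what lets us identify $F$ with a Galois subfield of $L$, and thereby with a normal subgroup of $G$ lying above $H$.
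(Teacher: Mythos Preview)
Your argument is correct and essentially identical to the paper's: both assume a nontrivial strong cluster magnification $L=L'F$, note that the Galois subfield $F\subseteq L$ corresponds to a proper normal subgroup of $G$ containing $H$, and derive a contradiction. Your treatment of the ``in particular'' clause is actually more careful than the paper's, which does not separately address the case $H=\{e\}$; your extra paragraph handling the Galois case via the degree constraint $[L':K]>2$ is a welcome piece of rigor.
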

    
  \begin{proof} Suppose $H$ is not contained in any proper normal subgroup of $G$. Assume $L/K$ is not primitive. Hence $L/K$ is obtained by nontrivial strong cluster magnification from some subextension say $L'/K$ through $F/K$ as in Definition \ref{SCM}. Thus $F=\tilde{L}^{H'}$ where $H\subset H'\unlhd G$. Thus $H'=G$. Hence $F=K$ which is a contradiction.\end{proof}

\subsection{Unique Chains for Extensions and Primitive Extensions}

We have the notions of unique descending chains and unique ascending chains for extensions introduced in Section 7 in \cite{Bhagwat_2025}. We recall those notions here. Let $L/K$ be a nontrivial finite extension. \smallskip

There is a unique strictly descending chain of subextensions
       \[ L=N_0\supsetneq N_1 \supsetneq N_2 \supsetneq \dots \supsetneq N_k\]
        such that for all $1\leq i\leq k$, $N_i$ is the unique intermediate extension for $N_{i-1}/K$ such that $N_{i-1}/N_i$ is Galois of maximum possible degree. 
\smallskip

        There is a unique strictly ascending chain of subextensions  \[ K=F_0\subsetneq F_1 \subsetneq F_2 \subsetneq \dots \subsetneq F_l \] such that for all $1\leq j\leq l$, $F_j$ is the unique intermediate extension for $L/F_{j-1}$ such that $F_j/F_{j-1}$ is Galois of maximum possible degree.\smallskip

Both the unique chains terminate since $L/K$ is finite. The following establishes a criterion for an extension to be primitive in terms of unique chains.

\begin{theorem} \label{unique chain and primitive}
    Consider a nontrivial finite extension $L/K$. Suppose one of the fields in the unique descending chain of $L/K$ which is different from both $L$ and $K$, coincides with one of the fields in the unique ascending chain of $L/K$. Then $L/K$ is primitive.
\end{theorem}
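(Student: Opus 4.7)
The plan is to prove the contrapositive: if $L/K$ is not primitive, then the descending and ascending chains of $L/K$ share no intermediate field other than $L$ and $K$. Assume $L=L'F$ is obtained by a nontrivial strong cluster magnification as in Definition \ref{SCM}, so $[L':K]>2$, $[F:K]>1$, $F/K$ is Galois, and $\tilde{L'}$ and $F$ are linearly disjoint over $K$. Since $\tilde{L'}/K$ and $F/K$ are both Galois and linearly disjoint, $\tilde L = \tilde{L'}\cdot F$, and restriction yields a product decomposition $G:=\mathrm{Gal}(\tilde L/K)\cong G'\times G''$ with $G':=\mathrm{Gal}(\tilde{L'}/K)$ and $G'':=\mathrm{Gal}(F/K)$. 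Under this identification $\mathrm{Gal}(\tilde L/\tilde{L'}) = \{1\}\times G''$, $\mathrm{Gal}(\tilde L/F) = G'\times\{1\}$, and $H:=\mathrm{Gal}(\tilde L/L) = H_0'\times\{1\}$, where $H_0':=\mathrm{Gal}(\tilde{L'}/L')$.

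The next step is to locate the two chains inside this product. For the descending chain, $H_i:=\mathrm{Gal}(\tilde L/N_i) = N_G(H_{i-1})$. A direct computation shows $N_G(A\times G'') = N_{G'}(A)\times G''$ for any subgroup $A\subseteq G'$, so by induction $H_i = N_{G'}^{(i)}(H_0')\times G''$ for every $i\ge 1$. In particular $H_i\supseteq\{1\}\times G'' = \mathrm{Gal}(\tilde L/\tilde{L'})$, so $N_i\subseteq\tilde{L'}$; combined with $N_i\subseteq L$ and the Galois-correspondence identity $L\cap\tilde{L'} = L'$ (obtained by taking the subgroup generated by $H$ and $\{1\}\times G''$, which equals $H_0'\times G''$), this forces $N_i\subseteq L'$ for every $i\ge 1$. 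For the ascending chain, $J_j:=\mathrm{Gal}(\tilde L/F_j)$ is the normal closure of $H$ in $J_{j-1}$; since conjugating $H=H_0'\times\{1\}$ by $(g,g'')\in G'\times G''$ yields $(gH_0'g^{-1})\times\{1\}$, a similar induction gives $J_j\subseteq G'\times\{1\}=\mathrm{Gal}(\tilde L/F)$ for every $j\ge 1$, hence $F_j\supseteq F$.

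If some field $E\ne L,K$ lies in both chains, then $E=N_i$ with $i\ge 1$ and $E=F_j$ with $j\ge 1$, yielding $F\subseteq E\subseteq L'$. But linear disjointness forces $F\cap L'\subseteq F\cap\tilde{L'} = K$, so $F=K$, contradicting $[F:K]>1$. The only real bookkeeping is the two product-decomposition inductions in the previous paragraph; these are routine because the normalizer and normal-closure operations interact cleanly with the direct product $G'\times G''$ supplied by linear disjointness, so I do not anticipate any genuine obstacle beyond verifying the base cases.
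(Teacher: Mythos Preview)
Your proof is correct and follows the same logical skeleton as the paper's: assume a nontrivial strong cluster magnification $L=L'F$, establish the two inclusions $N_i\subseteq L'$ (for $i\ge 1$) and $F\subseteq F_j$ (for $j\ge 1$), and then derive $F\subseteq L'$, contradicting linear disjointness. The difference is purely one of technique. You set up the explicit product decomposition $G\cong G'\times G''$ and compute the iterated normalizers and normal closures coordinate-wise; the paper instead argues each inclusion in one step at the field level: $F\subseteq F_1$ because $FF_1/K$ is Galois and $F_1$ is maximal with that property, and $N_1\subseteq L'$ because $L/L'$ is Galois so $H\unlhd\mathrm{Gal}(\tilde L/L')\subseteq N_G(H)$. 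The paper's argument is shorter and avoids the inductive bookkeeping, while yours makes the group-theoretic structure fully explicit and would generalize more transparently to the strong general magnification setting treated later in the paper (compare Theorem~\ref{unique chain SGM}).
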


\begin{proof}
    We have $N_i=F_j\neq L,K$ for some $i$ and some $j$. Assume that $L/K$ is not primitive. Hence $L/K$ is obtained by nontrivial strong cluster magnification from some subextension say $L'/K$ through $F/K$ as in Definition \ref{SCM}. Now $F/K$ is Galois and we have that $F_1$ is the unique intermediate extension for $L/K$ such that $F_1/K$ is Galois of maximum possible degree. We have $FF_1/K$ is also Galois. Hence $FF_1=F_1$. Thus $F\subset F_1$. Hence $F\subset F_j=N_i$.
    \smallskip
    
     Let $\tilde{L}$ be Galois closure of $L/K$ and $G={{\rm Gal}}(\tilde{L}/K)$ and $H={{\rm Gal}}(\tilde{L}/L)$. By Theorem 7.1.1 in \cite{Bhagwat_2025}, $N_1=\tilde{L}^{N_G(H)}$. Now since $L=L'F$ and $\tilde{L'}\cap F=K$, we have that $L/L'$ is Galois. Hence $L'=\tilde{L}^{H'}$ where $H\unlhd H' \subset G$. Thus $H'\subset N_G(H)$. Hence $N_1\subset L'$. Thus $N_i\subset L'$ and hence $F\subset L'$. This forces $F=K$ which gives a contradiction to the magnification being nontrivial.
\end{proof}

\begin{remark}
     A shorter proof of above theorem by using results from Section 8.4 in \cite{Bhagwat_2025}: We have $N_i=F_j\neq L,K$. Suppose $L/K$ is obtained by nontrivial strong cluster magnification from some subextension say $L'/K$ through $F/K$. By Theorem 8.4.1 and Remark 8.4.2 in \cite{Bhagwat_2025}, $N_i\subset L'$. By Theorem 8.4.3 and Remark 8.4.4 in \cite{Bhagwat_2025}, $F\subset F_j$. Thus $F\subset L'$ which gives a contradiction.
\end{remark}

\begin{remark}\label{galois not prim}
The assumption $N_i=F_j\neq L,K$ is important in the theorem. Suppose $L/K$ is Galois with Galois group $A\times B$ for $A$ and $B$ nontrivial groups and $|A|>2$. The unique descending chain for $L/K$ is $L=N_0\supsetneq K=N_1$ and unique ascending chain for $L/K$ is $K=F_0\subsetneq L=F_1$. Thus $N_0=F_1=L$ and $N_1=F_0=K$. But by Corollary 4.1.8 in \cite{Bhagwat_2025} we have that $L/K$ is not primitive.

\end{remark}

\begin{remark}\label{Snremark}
    The converse of Theorem \ref{unique chain and primitive} is false. Let $f$ over $K$ be irreducible of deg $n>2$ with Galois group ${\mathfrak S}_n$ with roots $\alpha_i\in \bar{K}$ for $1\leq i\leq n$. For $1 \leq k \leq n-2$, let $L_k=K(\alpha_1,\dots, \alpha_k)$. Assume $k\neq 1, n/2$. By Theorem 7.3.6 in \cite{Bhagwat_2025}, the unique descending chain for $L_k/K$ is $L_k\supsetneq N_k$ and unique ascending chain for $L_k/K$ is $K$. Hence $L_k/K$ doesn't satisfy hypothesis of Theorem \ref{unique chain and primitive} but we have that $L_k/K$ is primitive. This is because we don't have any $K\subsetneq F\subset L_k$ such that $F/K$ is Galois.
\end{remark}

\begin{example}
    Consider $K=\mathbb{Q}$ and let $n=2^k$ for $k\geq 2$. Let $c$ be a positive odd integer such that $f=x^n-c$ is
  an irreducible polynomial over $\mathbb{Q}$. Let $a = c^{1/n}$ be the positive real root of $f$. Let $L=\mathbb{Q}(a)$. Then $L/K$ is primitive.\smallskip

Since $c$ is odd, it does not divide the discriminant of $\mathbb{Q}(\zeta)/\mathbb{Q}$ where $\zeta$ is a primitive $n$-th root 
of unity in $\bar{\mathbb{Q}}$. Thus $\sqrt{c}\not \in \mathbb{Q}(\zeta)$. By Proposition 7.3.1 in \cite{Bhagwat_2025}, the unique descending chain is $L=N_0\supsetneq N_1\supsetneq \dots \supsetneq N_{k}=K$ and the unique ascending chain is $K=F_0\subsetneq F_1\subsetneq \dots \subsetneq F_{k}=L$ with $N_i= F_{k-i}=\mathbb{Q}(a^{2^i})$ for all $0\leq i\leq k$. Since $k\geq 2$, we are done by Theorem \ref{unique chain and primitive}.

    \end{example}

\subsection{Primitive Inverse Cluster Size Problem}

\begin{theorem}\label{Prim inv clus}
    (Primitive Inverse Cluster Size Problem for Number Fields) Let $K$ be a number field. Let $n>2$ and $r|n$ and $r<n$. Then there exists a primitive extension $L/K$ of degree $n$ with cluster size $r_K(L)=r$.

\end{theorem}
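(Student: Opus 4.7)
The plan is to apply Theorem \ref{unique chain and primitive}: construct a degree-$n$ extension $L/K$ with cluster size $r$ containing an intermediate field $F$, $K \subsetneq F \subsetneq L$, lying in both the unique descending and the unique ascending chain. By Theorem 7.1.1 of \cite{Bhagwat_2025} one has $N_1 = \tilde L^{N_G(H)}$, while $F_1 = \tilde L^{H^G}$ is the fixed field of the normal closure of $H$ in $G$; so the goal reduces to realizing, as Galois data over $K$, a pair $(G,H)$ with $[G:H] = n$, $[N_G(H):H] = r$, and the key equality $N_G(H) = H^G$.

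Setting $m = n/r \geq 2$, my candidate is the wreath product $G = C_r \wr C_m = (C_r)^m \rtimes C_m$ acting imprimitively on $mr$ points, with $H$ the stabilizer of a point, isomorphic to $(C_r)^{m-1}$. A short coordinate computation gives $N_G(H) = (C_r)^m$ (so $[N_G(H):H] = r$), and the normal closure $H^G$ also equals $(C_r)^m$: the transitive $C_m$-action sweeps $H$ through each of the $m$ coordinate subgroups $\{c_i = 0\} \subset (C_r)^m$, and any two such subgroups already generate $(C_r)^m$. To realize this data I take $F/K$ a cyclic Galois extension of degree $m$ (available over any number field by class field theory) with generator $\sigma$, and a cyclic degree-$r$ extension $L/F$ whose Galois-conjugates $L, L^\sigma, \ldots, L^{\sigma^{m-1}}$ are linearly disjoint over $F$. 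Then $\tilde L$ is their compositum, $\mathrm{Gal}(\tilde L/F) \cong (C_r)^m$, and $\mathrm{Gal}(F/K)$ permutes the factors by cyclic shift, yielding exactly $G = C_r \wr C_m$ with $H$ the point stabilizer; Theorem \ref{unique chain and primitive} applied to $F = \tilde L^{(C_r)^m}$ (of degree $m > 1$ over $K$, and strictly inside $L$ since $r \geq 2$) then delivers primitivity.

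The case $r = 1$ falls outside this scheme (then $F = L$); it is dispatched by Proposition \ref{simple prop} applied to an $S_n$-extension over $K$ obtained by Hilbert irreducibility, since $S_{n-1}$ is contained in no proper normal subgroup of $S_n$ for $n \geq 3$. The main obstacle is constructing the required independent cyclic $L/F$ in general. When $\mu_r \subset F$, Kummer theory does it instantly: pick $\beta \in F^*$ whose orbit $\{\beta, \sigma\beta, \ldots, \sigma^{m-1}\beta\}$ is independent in $F^*/(F^*)^r$ (generic by Hilbert irreducibility) and set $L = F(\sqrt[r]{\beta})$; a direct check with the chosen lift of $\sigma$ recovers the wreath-product action on roots. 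When $\mu_r \not\subset F$ one must either enlarge to $F(\mu_r)$, perform Kummer there, and descend carefully---checking that cluster size and the $N_1 = F_1$ coincidence survive under the descent---or invoke class field theory directly, constructing a cyclic degree-$r$ extension of $F$ by controlling the decomposition of a suitably chosen prime of $F$ so that its $\sigma$-translates are automatically disjoint. Executing this uniformly in $r$ and $K$ without inadvertently collapsing $\mathrm{Gal}(\tilde L/K)$ into a proper subquotient of $C_r \wr C_m$ is where the bulk of the work lies.
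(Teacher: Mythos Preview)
Your approach is essentially identical to the paper's: for $r=1$ you use an $\mathfrak S_n$-extension and Proposition~\ref{simple prop}; for $r>1$ you use the wreath product $G=C_r\wr C_m=(\Z/r\Z)^m\rtimes\Z/m\Z$ with $H\cong(\Z/r\Z)^{m-1}$ a point stabilizer, verify $N_G(H)=H^G=(\Z/r\Z)^m$, and apply Theorem~\ref{unique chain and primitive} to the common field $N_1=F_1$. This is exactly the group and exactly the mechanism the paper uses (it cites Theorem~7.3.4 of \cite{Bhagwat_2025} for the coincidence of the two chains, which amounts to your computation of $N_G(H)$ and $H^G$).

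The only divergence is in the realization step, and here you are creating unnecessary work for yourself. The paper does not build $L$ by hand: it simply invokes Theorem~3.1.1 of \cite{Bhagwat_2025}, which already supplies a degree-$n$ extension $L/K$ with Galois closure having group $G=(\Z/r\Z)^s\rtimes\Z/s\Z$ and the correct cluster size. That result in turn rests on the fact that $G$ is solvable, hence realizable as a Galois group over any number field (Shafarevich), together with the observation that the core of $H$ in $G$ is trivial so that $\tilde L$ really has group $G$. Your Kummer/class-field-theoretic construction, the descent from $F(\mu_r)$, and the worry about collapsing to a proper quotient are all avoidable: just cite Shafarevich (or the result from \cite{Bhagwat_2025}) and you are done.
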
 

\begin{proof}

We proceed in the same way as the first author and Bhagwat proceeded in the proof of Inverse Cluster Size Problem for Number Fields Theorem 3.1.1 in \cite{Bhagwat_2025}. Let $s=n/r$.\smallskip

For $r=1$ we have that there exists $L/K$ of degree $n$ with ${{\rm Gal}}(\tilde{L}/K)$ as ${\mathfrak S}_n$. This $L/K$ satisfies $r_K(L)=1$. The subgroup of ${\mathfrak S}_n$ fixing $L$ is isomorphic to ${\mathfrak S}_{n-1}$. Since ${\mathfrak S}_{n-1}$ is not contained in any proper normal subgroup of ${\mathfrak S}_{n}$. Hence by Proposition \ref{simple prop}, $L/K$ is primitive.  

\smallskip

For $r>1$ we have that there exists $L/K$ with $G={{\rm Gal}}(\tilde{L}/K)$ solvable such that action of the group is transitive on $n$ points, and a point stabiliser fixes precisely $r$ points. We have $G=(\Z/r\Z)^s \rtimes \Z / s\Z$ with semidirect product group law \[ ((a_1,\dots, a_s),b)\cdot((c_1,\dots,c_s),d)=((a_1,\dots, a_s) + (b\cdot (c_1,\dots, c_s)), b+d), \]
  where $b\cdot(c_1,\dots, c_s)=(c_{b+1},\dots, c_s,c_1,\dots, c_{b})$ for $b\neq 0$ \&  $0\cdot(c_1,\dots, c_s)=(c_1,\dots, c_s)$. Any point stabiliser is isomorphic to $(\Z/r\Z)^{s-1}$. We have $[L:K]=n$ and $r_K(L)=r$. \smallskip
  
Since $1<r<n$. Thus by Theorem 7.3.4 in \cite{Bhagwat_2025}, the unique descending chain is $L\supsetneq N\supsetneq K$ and the unique ascending chain is $K\subsetneq F \subsetneq L$ and they both coincide. Hence $N=F\neq L,K$. Thus by Theorem \ref{unique chain and primitive}, $L/K$ is primitive.
\end{proof}

\begin{remark}\label{Galois case remark}
    The same proof works for the case $r=n=p^k>2$ where $p$ is a prime. In that case $L/K$ is Galois with ${{\rm Gal}}(L/K)$ as $\mathbb{Z}/p^k \mathbb{Z}$. By uniqueness in structure theorem for finite abelian groups, ${{\rm Gal}}(L/K)$ is not isomorphic to $A\times B$ for any nontrivial groups $A$ and $B$. Hence by Corollary 4.1.8 in \cite{Bhagwat_2025}, $L/K$ is primitive.

    \smallskip
    
    But the same proof doesn't work for the case $r=n$ when $n$ has more than one prime in its prime factorization. In that case $L/K$ is Galois with ${{\rm Gal}}(L/K)$ as $\mathbb{Z}/n\mathbb{Z}$. By chinese remainder theorem, ${{\rm Gal}}(L/K)$ is isomorphic to $A\times B$ for some nontrivial groups $A$ and $B$ with $|A|>2$. Hence $L/K$ is not primitive similarly as in Remark \ref{galois not prim}.\smallskip

    In fact we can show that there are integers $n>2$ such that any extension $L/K$ which is Galois of degree $n$, is not primitive. For example, for $n=pq$ where $p<q$ are distinct primes and $p\nmid q-1$, the Galois group of $L/K$ is cyclic and hence by previous paragraph, $L/K$ is not primitive.\end{remark}
\smallskip

\subsection{Primitive Inverse Ascending Index Problem}
\hfill
\smallskip

 For an extension $L/K$, we have the notion of ascending index of $L/K$, $t_K(L)$ which is $[G:H^G]$ (where $H^G$ is normal closure of $H$ in $G$) and the quantity $u_K(L)$ which is $[H^G:H]$ (See Section 7.2 and Section 9 in \cite{Bhagwat_2025} for basic properties of ascending index of field extension).\smallskip

By proof of Theorem \ref{Prim inv clus} and by the proof of Inverse Ascending Index Problem for Number Fields Theorem 9.0.5 in \cite{Bhagwat_2025}, we have the following.

\begin{theorem}(Primitive Inverse Ascending Index Problem for Number Fields) Let $K$ be a number field. Let $n>2$ and $t|n$ and $t<n$. Then there exists a primitive extension $L/K$ of degree $n$ with ascending index $t_K(L)=t$.
\end{theorem}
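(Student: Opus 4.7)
The plan is to mirror the proof of Theorem \ref{Prim inv clus} and exploit the fact that the constructions used there simultaneously prescribe the ascending index. For $t = 1$ I would realize ${\mathfrak S}_n$ as $Gal(\tilde{L}/K)$ over the given number field and take $L$ to be the fixed field of a point stabilizer $H = {\mathfrak S}_{n-1}$. For $n > 2$ the normal closure $H^G$ equals ${\mathfrak S}_n$ (any transposition in $H$ already normally generates the whole group), so $t_K(L) = 1$, and primitivity is immediate from Proposition \ref{simple prop} since ${\mathfrak S}_{n-1}$ lies in no proper normal subgroup of ${\mathfrak S}_n$.

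For $t > 1$ I would set $r := n/t$, so that $1 < r < n$ and $r \mid n$, and apply the construction in the proof of Theorem \ref{Prim inv clus} to the triple $(K,n,r)$. This yields a primitive extension $L/K$ of degree $n$ realized by the transitive group $G = (\Z/r\Z)^s \rtimes \Z/s\Z$ with $s = n/r = t$. By that proof, the unique descending and ascending chains of $L/K$ coincide at a middle field $N = F$ with $[L:N] = r$ and $[N:K] = n/r = t$. Since $F = F_1$ is the maximal Galois subextension of $L/K$ and $t_K(L) = [F_1:K]$, this automatically gives $t_K(L) = t$, as desired.

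The one point that requires care is confirming that the middle field $F$ supplied by the proof of Theorem \ref{Prim inv clus} really is $F_1$, equivalently that $H^G$ coincides with the normal subgroup $(\Z/r\Z)^s$ of index $s$ in $G$. This is a short group-theoretic check: the cyclic factor $\Z/s\Z$ permutes the $s$ coordinates, so the conjugates of the point stabilizer sweep out all of $(\Z/r\Z)^s$, which is normal in $G$ as the kernel of the projection $G \to \Z/s\Z$. Alternatively, the numerical realization $t_K(L) = t$ can be imported directly from the proof of Theorem 9.0.5 in \cite{Bhagwat_2025} and combined with the primitivity criteria used in Theorem \ref{Prim inv clus} (Proposition \ref{simple prop} when $t = 1$, Theorem \ref{unique chain and primitive} when $t > 1$).
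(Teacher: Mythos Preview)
Your proposal is correct and follows essentially the same approach as the paper: the paper's proof consists of the single sentence ``By proof of Theorem \ref{Prim inv clus} and by the proof of Inverse Ascending Index Problem for Number Fields Theorem 9.0.5 in \cite{Bhagwat_2025}'', and your argument unpacks exactly this, reusing the ${\mathfrak S}_n$ construction for $t=1$ and the solvable group $(\Z/r\Z)^s\rtimes\Z/s\Z$ with $r=n/t$ for $t>1$, then invoking the same primitivity criteria (Proposition \ref{simple prop} and Theorem \ref{unique chain and primitive}). Your explicit verification that $H^G=(\Z/r\Z)^s$ and hence $t_K(L)=s=t$ is precisely the content being imported from Theorem 9.0.5 of \cite{Bhagwat_2025}.
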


\section{General Magnification}\label{General}

Just like the Cluster Magnification Theorem (Theorem 1 in \cite{krithika2023root}) proved by the second and the third author inspired the first author and Bhagwat to define the concept of Strong Cluster Magnification and Weak Cluster Magnification in Section 4.1 and Section 4.3 in \cite{Bhagwat_2025} respectively. In the same way, inspired from the Lemma 2 and the proof of the Main Theorem in \cite{krithika2025inflated} by the second and the third author, we define in this section a more general notion of magnification. Then cluster magnification would turn out to be a special case of that.

\subsection{Strong General Magnification}

\begin{definition}\label{SGM}
    A finite extension $M/K$ is said to be obtained by strong general magnification from a subextension $L/K$ if we have the following:
\smallskip
 
 \begin{enumerate}
 \item  $[L:K] > 1,$ 
 \smallskip
      
\item there exists a finite extension $J/K$ such that the Galois closure $\tilde{L}$ of $L$ in $\bar{K}$ and Galois closure $\tilde{J}$ of $J$ in $\bar{K}$ are linearly disjoint over $K$ (which means $\tilde{L}\cap \tilde{J}=K$). \smallskip

\item $LJ=M$.\smallskip

 \end{enumerate}

 The tuple $(r_K(J),s_K(J),t_K(J),u_K(J))$ is called the magnification tuple denoted by $(r,s,t,u)$. 
The magnification is called trivial if $J = K$ and nontrivial otherwise. 
 \end{definition}

 \begin{remark}
     Definition \ref{SGM} is symmetric in the nontrivial case. Suppose $M/K$ is obtained by nontrivial strong general magnification from $L/K$ through $J/K$. Then $M/K$ is obtained by nontrivial strong general magnification from $J/K$ through $L/K$.
 \end{remark}

\begin{remark}
    Strong General Magnification for magnification tuple $(d,1,d,1)$ is precisely Strong Cluster Magnification for magnification factor $d$.

\end{remark}

Let $\tilde{M}$ be Galois closure of $M/K$ inside $\bar{K}$. Let $G'= {{\rm Gal}}(\tilde{M}/K)$ and $H' = {{\rm Gal}}(\tilde{M}/M)$.

\begin{proposition}
    
\label{SGM prop}
    Suppose $M/K$ is obtained by strong general magnification from  $L/K$ through $J/K$. Let $\tilde L$ and $\tilde{J}$ be as in the Definition \ref{SGM}. Let $G = {{\rm Gal}}(\tilde{L}/K)$, $H= {{\rm Gal}}(\tilde{L}/L)$, $S= {{\rm Gal}}(\tilde{J}/K)$ and $T = {{\rm Gal}}(\tilde{J}/J)$. Then the following hold.

\smallskip 

\begin{enumerate}

\smallskip  \item $\tilde{M}=\tilde{L} \tilde{J}$ and $M\cap \tilde{L}=L$ and $M\cap \tilde{J}=J$ and $M\tilde{L}\cap M\tilde{J}=M$.

\smallskip \item $G'\cong G\times S$ where isomorphism is given by  $\lambda\in G' \mapsto (\lambda|_{\tilde{L}},\lambda|_{\tilde{J}})$.

\smallskip

\item  Furthermore $H'\cong H\times T \subset G\times S$ under the above isomorphism.

\smallskip \item $J$ is uniquely determined by $L$ and $M$. 

\smallskip \item If $M/K$ is Galois then $L/K$ and $J/K$ too are Galois and $M/K$ is obtained by strong cluster magnification from $L/K$ through $J/K$.

    \end{enumerate}
\end{proposition}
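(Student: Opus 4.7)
The plan is to extract all five assertions from a single structural observation: once one establishes $\tilde{M}=\tilde{L}\tilde{J}$, the simultaneous restriction map $\lambda\mapsto(\lambda|_{\tilde L},\lambda|_{\tilde J})$ identifies $G'$ with $G\times S$, and every remaining claim is then read off directly from the Galois correspondence applied to subgroups of $G\times S$. I would therefore organize the proof in three layers: first establish $\tilde M=\tilde L\tilde J$; then set up the product decomposition of $G'$ and locate $H'$ inside it; then harvest (1), (4), and (5) as consequences.

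For the first layer, I argue two inclusions. On one hand $\tilde L\tilde J/K$ is Galois, being the compositum of two Galois extensions, and contains $LJ=M$; hence it contains the Galois closure $\tilde M$. On the other hand $\tilde M/K$ is normal and contains $L,J$, so it must contain every $K$-conjugate of $L$ and of $J$, and therefore all of $\tilde L$ and $\tilde J$. This yields $\tilde M=\tilde L\tilde J$, the first clause of (1). Next I write the restriction map $\phi\colon G'\to G\times S$; it is well-defined since $\tilde L$ and $\tilde J$ are Galois over $K$ and hence stable under every $K$-automorphism, and injective because its kernel fixes $\tilde L\tilde J=\tilde M$ pointwise. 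The linear-disjointness hypothesis $\tilde L\cap\tilde J=K$ gives $[\tilde L\tilde J:K]=[\tilde L:K][\tilde J:K]$, so an order count makes $\phi$ an isomorphism, proving (2). For (3), an element of $G'$ fixes $M=LJ$ iff it fixes both $L$ and $J$, which under $\phi$ means its two components lie in $H$ and in $T$ respectively, so $H'\cong H\times T$.

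The remaining clauses of (1) then fall out of Galois correspondence inside $G\times S$. The intersection $M\cap\tilde L$ corresponds to the subgroup generated by $H'=H\times T$ and $\mathrm{Gal}(\tilde M/\tilde L)=\{1\}\times S$, namely $H\times S$, which corresponds to $L$; symmetrically $M\cap\tilde J=J$. Likewise $M\tilde L$ corresponds to $(H\times T)\cap(\{1\}\times S)=\{1\}\times T$ and $M\tilde J$ to $H\times\{1\}$, so their intersection corresponds to $\langle\{1\}\times T,\ H\times\{1\}\rangle=H\times T=H'$, giving $M\tilde L\cap M\tilde J=M$. For (4), the identity $J=M\cap\tilde J$ already proved reduces uniqueness of $J$ to pinning down the subfield $\tilde J\subset\tilde M$ from $L$ and $M$; I expect this to be the main obstacle, since a priori $\mathrm{Gal}(\tilde M/\tilde J)=G\times\{1\}$ is only determined as \emph{some} normal complement to $\mathrm{Gal}(\tilde M/\tilde L)=\{1\}\times S$ in $G'$, and one must argue carefully that the extra requirement $LJ=M$ (equivalently, that $H'=H\times T$ inside the resulting product) singles out a unique such complement.

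Finally for (5), if $M/K$ is Galois then $H'\unlhd G'$; under $\phi$, normality of $H\times T$ inside $G\times S$ forces $H\unlhd G$ and $T\unlhd S$, so $L/K$ and $J/K$ are both Galois. Consequently $\tilde L=L$ and $\tilde J=J$, the hypothesis $\tilde L\cap\tilde J=K$ becomes $L\cap J=K$ with $J/K$ Galois, and $J$ itself plays the role of the Galois extension $F$ in Definition \ref{SCM}, exhibiting $M$ as a strong cluster magnification of $L$.
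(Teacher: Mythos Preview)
The paper states this proposition without proof, so there is no argument to compare against; your task amounts to supplying what the authors omitted.  Your treatment of (1), (2), (3) and (5) is correct and is exactly the standard route: once $\tilde M=\tilde L\tilde J$ is in hand, the restriction isomorphism $G'\cong G\times S$ does all the work, and every field identity in (1) is read off from the Galois correspondence for subgroups of a direct product.  For (5) you should note one omission: Definition~\ref{SCM} requires $[L:K]>2$, whereas Definition~\ref{SGM} only requires $[L:K]>1$, so the conclusion ``$M/K$ is obtained by strong cluster magnification from $L/K$'' is literally false when $[L:K]=2$; this is a defect of the statement rather than of your argument.

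Your hesitation at (4) is well founded: the assertion is false as written.  Take $K=\Q$, $L=\Q(\sqrt2)$, $J=\Q(\sqrt3)$, $M=\Q(\sqrt2,\sqrt3)$.  Then $\tilde L=L$, $\tilde J=J$, $\tilde L\cap\tilde J=K$, $LJ=M$, and $[L:K]=2>1$, so $M/K$ is obtained by strong general magnification from $L/K$ through $J/K$.  But with $J'=\Q(\sqrt6)$ one has $\tilde{J'}=J'$, $L\cap J'=K$, and $LJ'=M$ as well, so $J'$ works equally; yet $J\ne J'$ and they are not even $K$-isomorphic.  The mechanism is precisely the one you identified: $\mathrm{Gal}(\tilde M/\tilde J)$ need only be \emph{some} normal complement to $\mathrm{Gal}(\tilde M/\tilde L)$ in $G'$, and the extra constraint $LJ=M$ does not single one out.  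So the gap you flagged is not a missing idea on your part but a genuine error in the statement; the natural fix would be to impose $[L:K]>2$ (mirroring Definition~\ref{SCM}), under which one can hope to recover uniqueness, but that requires a separate argument.
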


The following proposition justifies the terminology in Definition \ref{SGM}. The proof follows from the proof of Lemma 6.5 in \cite{jaiswal2025rootcapacityintersectionindicium} but we still provide it for the sake of completeness.

\begin{proposition}
    
 \label{GM}(General Magnification) Let $M/K$ be obtained by strong general magnification from $L/K$ with magnification tuple $(r,s,t,u)$. Then $[M:K]=r\cdot s\cdot  [L:K]=t\cdot u\cdot  [L:K]$ and $r_K(M)=r\cdot r_K(L)$ and $s_K(M)=s\cdot s_K(L)$ and $t_K(M)=t\cdot t_K(L)$ and $u_K(M)=u\cdot u_K(L)$.
    \end{proposition}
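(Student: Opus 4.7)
The plan is to reduce everything to group theory via Proposition \ref{SGM prop}, which gives $G' \cong G \times S$ and $H' \cong H \times T$ under the componentwise restriction isomorphism. Once this identification is in place, each of the five invariants for $M/K$ becomes a computation inside the direct product $G \times S$ applied to the subgroup $H \times T$, and the invariants factor across the product.

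First I would establish the two key group-theoretic facts for direct products: namely that
\[
N_{G \times S}(H \times T) = N_G(H) \times N_S(T) \quad\text{and}\quad (H \times T)^{G \times S} = H^G \times T^S.
\]
The normalizer identity is immediate by writing a general element as $(g,\sigma)$ and checking the normalization condition componentwise. For the normal closure identity, the right side is visibly normal in $G \times S$ and contains $H \times T$; conversely any normal subgroup $N \lhd G \times S$ containing $H \times T$ must contain $H \times \{e\}$ and $\{e\} \times T$, hence their normal closures in $G \times S$, which are $H^G \times \{e\}$ and $\{e\} \times T^S$, forcing $N \supseteq H^G \times T^S$.

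Given these two facts, the five formulas follow in one line each. The degree formula comes from $[M:K] = [G':H'] = [G:H][S:T] = [L:K] \cdot [J:K]$, together with the standard decompositions $[J:K] = [S:N_S(T)][N_S(T):T] = s \cdot r$ and $[J:K] = [S:T^S][T^S:T] = t \cdot u$. For the cluster size,
\[
r_K(M) = [N_{G'}(H'):H'] = [N_G(H):H]\cdot[N_S(T):T] = r_K(L)\cdot r,
\]
and analogously $s_K(M) = [G':N_{G'}(H')] = s_K(L) \cdot s$. The ascending index and the $u$-quantity follow from the normal closure identity:
\[
t_K(M) = [G':(H')^{G'}] = [G:H^G]\cdot [S:T^S] = t_K(L)\cdot t,
\]
\[
u_K(M) = [(H')^{G'}:H'] = [H^G:H]\cdot [T^S:T] = u_K(L)\cdot u.
\]

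There is no real obstacle here once Proposition \ref{SGM prop} is in hand; the only mildly non-routine point is the identification of the normal closure of a product subgroup in a direct product, and I would state and justify that as a short preliminary observation before running the five computations in sequence.
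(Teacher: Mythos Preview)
Your proposal is correct and follows essentially the same approach as the paper: identify $G'\cong G\times S$ and $H'\cong H\times T$ via Proposition~\ref{SGM prop}, use the factorizations $N_{G\times S}(H\times T)=N_G(H)\times N_S(T)$ and $(H\times T)^{G\times S}=H^G\times T^S$, and read off each invariant as a product. The only cosmetic differences are that the paper asserts the normal closure identity without spelling out the argument you give, and it obtains $s_K(M)$ and $u_K(M)$ indirectly from $r_K(M)s_K(M)=[M:K]=t_K(M)u_K(M)$ rather than by the direct index computations you use.
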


\begin{proof}
 By identifying the groups with their respective images under the isomorphism in Proposition \ref{SGM prop} (2), we have $G'= G\times S$, $H'= H\times T$. Thus $$[M:K]=[L:K][J:K]= r_K(J) s_K(J) [L:K]= t_K(J)u_K(J)\ [L:K].$$ Also $N_{G'}(H')= N_G(H)\times N_S(T)$. Hence $r_K(M)=[N_{G'}(H'):H']=r_K(L) r_K(J)$. Since\\
 $r_K(M) s_K(M)=[M:K]$. Thus $s_K(M)=s_K(J)\ s_K(L)$. \smallskip

  We can also show that $H'^{G'}= H^G \times T^S$ where $H^G$ denotes the normal closure of $H$ in $G$. Hence $t_K(M)=$ $[G': H'^{G'}]=t_K(L) t_K(J)$. Since $t_K(M) u_K(M)=[M:K]$. Thus $u_K(M)=u_K(J) u_K(L)$.
\end{proof}

We have the following hereditary property for strong general magnification.

\begin{proposition} \label{hereditary}
    Suppose $M/K$ is obtained by strong general magnification from  $L/K$ through $J/K$ as in Def \ref{SGM}. Let $K\subset L' \subset L$ and $K\subset J'\subset J$ with $[L:L']>1$. Then $M/L'J'$ is obtained by strong general magnification from $LJ'/L'J'$ through $L'J/L'J'$ with magnification tuple\\
    $(r_{J'}(J),s_{J'}(J),t_{J'}(J),u_{J'}(J))$.
    
\end{proposition}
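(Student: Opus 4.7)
The plan is to verify the three conditions of Definition \ref{SGM} for $M/L'J'$ with respect to the subextensions $LJ'/L'J'$ and $L'J/L'J'$, and then to compute the magnification tuple by exploiting the product decomposition $G' = {\rm Gal}(\tilde L\tilde J/K) \cong G\times S$ furnished by Proposition \ref{SGM prop}(2). Throughout I write $G = {\rm Gal}(\tilde L/K)$, $S = {\rm Gal}(\tilde J/K)$, $H = {\rm Gal}(\tilde L/L)$, $H' = {\rm Gal}(\tilde L/L')$, $T = {\rm Gal}(\tilde J/J)$, $T' = {\rm Gal}(\tilde J/J')$, and identify $G'$ with $G\times S$.

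The compositum condition $(LJ')(L'J) = LJ = M$ is immediate from $L' \subset L$ and $J' \subset J$. For the nontriviality $[LJ':L'J']>1$, I would use that $\tilde L\cap \tilde J = K$ forces $L'$ and $\tilde J$ to be linearly disjoint over $K$, so $[LJ':J'] = [L:K]$ and $[L'J':J'] = [L':K]$, giving $[LJ':L'J'] = [L:L'] > 1$ by hypothesis. For the linear disjointness of the Galois closures $\widetilde{LJ'}$ and $\widetilde{L'J}$ (taken inside $\bar K$) over $L'J'$, I first observe that $\tilde L J'/L'J'$ is Galois (since $\tilde L/K$ is) and contains $LJ'$, so $\widetilde{LJ'} \subset \tilde L J'$; symmetrically $\widetilde{L'J} \subset L'\tilde J$. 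Under the identification $G' \cong G\times S$, the fields $\tilde L J'$, $L'\tilde J$, and $L'J'$ correspond respectively to the subgroups $\{e\}\times T'$, $H'\times\{e\}$, and $H'\times T'$. Since the subgroup generated by $\{e\}\times T'$ and $H'\times\{e\}$ is exactly $H'\times T'$, Galois correspondence yields $\tilde L J' \cap L'\tilde J = L'J'$, and hence $\widetilde{LJ'}\cap \widetilde{L'J} = L'J'$.

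To compute the magnification tuple I would use that $L' \subset \tilde L$ together with $\tilde L\cap \tilde J = K$ makes $L'$ and $\tilde J$ linearly disjoint over $K$, so restriction induces an isomorphism ${\rm Gal}(L'\tilde J/L'J') \xrightarrow{\sim} T'$. A short degree comparison, using $[L'J:L'] = [J:K]$ and $[L'\tilde J:L'J] = [\tilde J:J]$, shows $L'J\cap \tilde J = J$, so under this isomorphism $L'J$ corresponds to the subgroup $T \subset T'$. Consequently the pair (Galois group of the Galois closure of $L'J/L'J'$, subgroup fixing $L'J$) is isomorphic, as a group with a distinguished subgroup, to the pair (Galois group of the Galois closure of $J/J'$, subgroup fixing $J$). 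Since each of $r,s,t,u$ is computed purely from such a pair via indices, normalizers, and normal closures, the magnification tuple equals $(r_{J'}(J), s_{J'}(J), t_{J'}(J), u_{J'}(J))$, as desired. The main point of care is the initial identification of the Galois closures as subfields of $\tilde L J'$ and $L'\tilde J$ inside $\tilde L\tilde J$; once that is set up, the remaining verifications reduce to small exercises in the Galois correspondence for the product group $G\times S$.
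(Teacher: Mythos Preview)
Your argument is correct and follows essentially the same route as the paper: verify the three conditions of Definition~\ref{SGM}, bound the Galois closures of $LJ'/L'J'$ and $L'J/L'J'$ inside $\tilde L J'$ and $L'\tilde J$ respectively, compute $\tilde L J' \cap L'\tilde J = L'J'$, and read off the magnification tuple from the restriction isomorphism ${\rm Gal}(L'\tilde J/L'J')\cong {\rm Gal}(\tilde J/J')$. The only cosmetic differences are that the paper invokes cited lemmas on linear disjointness where you work directly in the product group $G\times S$, and that your symbol $H'$ for ${\rm Gal}(\tilde L/L')$ collides with the paper's $H'={\rm Gal}(\tilde M/M)$ from Proposition~\ref{SGM prop}; renaming it (the paper uses $H_0$) would avoid confusion.
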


\begin{proof}
 We check that the conditions in Definition \ref{SGM} hold. Let notation be as in Proposition \ref{SGM prop}. Let $L'=\tilde{L}^{H_0}$ and $J'=\tilde{J}^{T_0}$. Thus $L' J'=\tilde{M}^{H_0\times T_0}$. Let $L_1$ be Galois closure of $LJ'/L'J'$ and $J_1$ be Galois closure of $L'J/L'J'$. 
 
 \smallskip

Since $\tilde{L}\cap \tilde{J}=K$. Hence $\tilde{L}\cap J'=K$. Thus by Lemma 2.2.2 in \cite{Bhagwat_2025} we have $\tilde{L}\cap L'J'=L'$. Hence by Theorem 2.6 in \cite{conrad2023galois}, we have ${{\rm Gal}}(\tilde{L}J'/LJ')\cong {{\rm Gal}}(\tilde{L}/L)=H$ and ${{\rm Gal}}(\tilde{L}J'/L'J')\cong {{\rm Gal}}(\tilde{L}/L')=H_0$. Thus $[LJ':L'J']>1$ and $L_1\subset \tilde{L}J'$. Similarly we have $L'\cap \tilde{J}=K$ and hence $L'J'\cap \tilde{J}=J'$ and ${{\rm Gal}}(L'\tilde{J}/L' J)\cong {{\rm Gal}}(\tilde{J}/J)=T$ and ${{\rm Gal}}(L'\tilde{J}/L'J')\cong {{\rm Gal}}(\tilde{J}/J')=T_0$ and $J_1\subset L'\tilde{J}$.

\smallskip

Now $(\tilde{L}J')(L'\tilde{J})=\tilde{L}\tilde{J}=\tilde{M}$ and ${{\rm Gal}}(\tilde{M}/L'J')=H_0\times T_0$. By Theorem 2.1 in \cite{conrad2023galois} we have $\tilde{L}J'\cap L'\tilde{J}=L'J'$. Hence $L_1\cap J_1=L'J'$. Also $(LJ')(L'J)=LJ=M$. Finally since the Galois groups are isomorphic, we have 
that magnification tuple $(r_{L'J'}(L'J),s_{L'J'}(L'J),t_{L'J'}(L'J),u_{L'J'}(L'J))=(r_{J'}(J),s_{J'}(J),t_{J'}(J),u_{J'}(J))$.\end{proof}

By letting $J'=K$, we have the following.

\begin{corollary}
    Let $M/K$ be obtained by strong general magnification from $L/K$ through $J/K$. Then for any $K\subset L'\subset L$ with $[L:L']>1$, the extension $M/L'$ is obtained by strong general magnification from $L/L'$ through $L'J/L'$ with same magnification tuple.
\end{corollary}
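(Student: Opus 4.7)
The plan is to derive this as an immediate specialization of Proposition \ref{hereditary}. That proposition allows us the freedom to choose any intermediate fields $L'$ and $J'$ with $K\subset L'\subset L$ and $K\subset J'\subset J$; the present corollary is simply the case $J'=K$.

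Concretely, I would first verify that the hypotheses of Proposition \ref{hereditary} are satisfied with $J'=K$: the containment $K\subset J'\subset J$ holds trivially, and the assumption $[L:L']>1$ is part of the hypothesis of the corollary. Next, I would rewrite the conclusion of Proposition \ref{hereditary} under this choice. Since $J'=K$, the composita simplify to $L'J'=L'$ and $LJ'=L$, while $L'J$ is unchanged. Thus Proposition \ref{hereditary} yields that $M/L'$ is obtained by strong general magnification from $L/L'$ through $L'J/L'$.

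Finally, I would identify the magnification tuple. Proposition \ref{hereditary} outputs the tuple $(r_{J'}(J), s_{J'}(J), t_{J'}(J), u_{J'}(J))$, which in our case is $(r_K(J), s_K(J), t_K(J), u_K(J))$. By Definition \ref{SGM}, this is precisely the magnification tuple $(r,s,t,u)$ associated to the original magnification of $M/K$ from $L/K$ through $J/K$. Hence the magnification tuples coincide.

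There is no genuine obstacle here; the only care required is making sure the specialization $J'=K$ is legal (it is, with no parasitic hypotheses being lost) and that the simplifications $L'K=L'$ and $LK=L$ are tracked through the statement so that the extensions named in the corollary agree with those produced by Proposition \ref{hereditary}.
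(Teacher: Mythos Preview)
Your proposal is correct and follows exactly the paper's approach: the paper derives the corollary as the specialization $J'=K$ of Proposition \ref{hereditary}, which is precisely what you do.
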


    
    
\smallskip

   We have an equivalent criterion for strong general magnification for a field extension in terms of Galois groups.\smallskip

\begin{proposition}

 \label{SGM criterion}
    An extension $M/K$ is obtained by nontrivial strong general magnification from some subextension $L/K$
    if and only if ${{\rm Gal}}(\tilde{M}/K)\cong A\times B$ for nontrivial groups $A$ and $B$ and ${{\rm Gal}}(\tilde{M}/M)\cong A'\times B'$ (under the same isomorphism) for a subgroup $A'\subset A$ with $[A:A']>1$ and a subgroup $B'\subset B$ with $[B:B']>1$.\smallskip

\end{proposition}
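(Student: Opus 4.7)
The plan is to prove both directions of the equivalence. The forward direction follows immediately from Proposition \ref{SGM prop}, while the backward direction is a construction via Galois correspondence.

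For the forward direction, assume $M/K$ is obtained by nontrivial strong general magnification from $L/K$ through $J/K$. Proposition \ref{SGM prop} parts (2) and (3) supply an isomorphism $G' \cong G \times S$ under which $H'$ corresponds to $H \times T$. Setting $A = G$, $A' = H$, $B = S$, $B' = T$ yields the required decomposition, and the indices $[A:A'] = [L:K] > 1$ and $[B:B'] = [J:K] > 1$ follow from condition (1) of Definition \ref{SGM} and from the nontriviality assumption $J \neq K$ respectively.

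For the backward direction, given an isomorphism $G' \cong A \times B$ under which $H'$ corresponds to $A' \times B'$, define $L = \tilde M^{A \times B'}$ and $J = \tilde M^{A' \times B}$ via the Galois correspondence inside $\tilde M$. The identity $(A \times B') \cap (A' \times B) = A' \times B' = H'$ immediately gives $LJ = \tilde M^{H'} = M$. Moreover $[L:K] = [B:B'] > 1$, verifying condition (1) of Definition \ref{SGM}, and $[J:K] = [A:A'] > 1$, ensuring that $J \neq K$ so the magnification is nontrivial.

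The main step is verifying linear disjointness $\tilde L \cap \tilde J = K$. The Galois closures $\tilde L$ and $\tilde J$ correspond to the normal cores of $A \times B'$ and $A' \times B$ in $G'$. From $(a,b)(A \times B')(a,b)^{-1} = A \times bB'b^{-1}$ one computes that the normal core of $A \times B'$ equals $A \times N_B$, where $N_B = \bigcap_{b \in B} bB'b^{-1}$, and symmetrically for $A' \times B$ one obtains $N_A \times B$ with $N_A = \bigcap_{a \in A} aA'a^{-1}$. As both are normal in $G'$, their product is a subgroup, and $(A \times N_B)(N_A \times B) = A \times B = G'$, which via Galois correspondence gives $\tilde L \cap \tilde J = K$. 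I expect this normal-core identification in direct products to be the only nonroutine point; the rest is straightforward bookkeeping through the Galois correspondence.
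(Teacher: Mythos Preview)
The paper states Proposition~\ref{SGM criterion} without proof, so there is no argument to compare against. Your proof is correct and supplies exactly the expected details: the forward direction is immediate from Proposition~\ref{SGM prop}(2),(3), and for the converse your choice $L=\tilde M^{A\times B'}$, $J=\tilde M^{A'\times B}$ together with the normal-core computation $(A\times N_B)(N_A\times B)=A\times B$ cleanly verifies the linear-disjointness condition $\tilde L\cap\tilde J=K$.
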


\subsection{Weak General Magnification}

The following is Definition 4.3.1 in \cite{Bhagwat_2025}.

\begin{definition}
        $M/K$ is said to be obtained by weak cluster magnification from a subextension $L/K$ if $r_K(L)| r_K(M)$. We call $d=r_K(M)/r_K(L)$ as the magnification factor. The magnification is called trivial if $d=1$ and nontrivial otherwise.
\end{definition}

Now we define the following.

\begin{definition}

    $M/K$ is said to be obtained by weak general magnification from a subextension $L/K$ if $r_K(L)|r_K(M)$ and $s_K(L)|s_K(M)$ and $t_K(L)|t_K(M)$ and $u_K(L)|u_K(M)$. We call $(r,s,t,u)=(r_K(M)/r_K(L),s_K(M)/s_K(L),t_K(M)/t_K(L),u_K(M)/u_K(L))$ as the magnification tuple. The magnification is called trivial if $r=s=t=u=1$ and nontrivial otherwise.
\end{definition}

\begin{remark}
    Clearly Weak General Magnification implies Weak Cluster Magnification. But the converse is false. Consider the case in Remark \ref{Snremark}. Consider $1\leq j<k\leq n-2$. By proof of Theorem 3 in \cite{krithika2023root}, we have $r_K(L_k)=k!$ and $r_K(L_j)=j!$. Thus $s_K(L_k)=\ {n\choose k}$ and $s_K(L_j)=\ {n\choose j}$. Thus $L_k/K$ is obtained by nontrivial weak cluster magnification from $L_j/K$ but is not in general obtained from non trivial weak general magnification from $L_j/K$. For example ${4\choose 1} \nmid\ {4\choose 2}$.

\end{remark}

\begin{remark}

    From Definition \ref{SGM} and Proposition \ref{GM}, if $M/K$ is obtained by strong general magnification from $L/K$ then $M/K$ is obtained by weak general magnification from $L/K$.\smallskip

    Consider the case in above Remark for $k > n/2 \ \text{and}\ j=n-k$. Then $s_K(L_k)=s_K(L_j)$. Also $t_K(L_k)=t_K(L_j)=1$ by Theorem 7.3.6 in \cite{Bhagwat_2025}. Hence we also have that $u_K(L_j)=[L_j:K]$ divides $[L_k:K]=u_K(L_k)$. Hence $L_k/K$ is obtained by nontrivial weak general magnification from $L_j/K$ but is not obtained by strong general magnification from $L_j/K$ since $\tilde{L_{j}}\cap L_k=L_k\neq L_j$ (which contradicts Proposition \ref{SGM prop} (1)).\smallskip
    
    This justifies the word `weak' in above definition.\end{remark}
    
    \smallskip

\subsection{Strong General Magnification and Unique Chains}



  

\begin{theorem}\label{unique chain SGM}
     Suppose $M/K$ is obtained by strong general magnification from  $L/K$ through $J/K$. 
     
     \begin{enumerate}
\item Let the unique descending chain for $L/K$ be $L=N_0\supsetneq N_1 \supsetneq N_2 \supsetneq \dots \supsetneq N_k$ and the unique descending chain for $J/K$ be $J=N'_0\supsetneq N'_1 \supsetneq N'_2 \supsetneq \dots \supsetneq N'_{k'}$. Then the unique descending chain for $M/K$ is $M=N_0 N'_0\supsetneq N_1 N'_1 \supsetneq N_2 N'_2 \supsetneq \dots \supsetneq N_{k'} N'_{k'}\supsetneq N_{k'+1} N'_{k'}\supsetneq \dots  \supsetneq N_{k} N'_{k'}$ for $k\geq k'$ and $M=N_0 N'_0\supsetneq N_1 N'_1 \supsetneq N_2 N'_2 \supsetneq \dots \supsetneq N_{k} N'_{k}\supsetneq N_{k} N'_{k+1}\supsetneq \dots  \supsetneq N_{k} N'_{k'}$ for $k<k'$.

\smallskip

\item Let the unique ascending chain for $L/K$ be $K=F_0\subsetneq F_1 \subsetneq F_2 \subsetneq \dots \subsetneq F_l$ and the unique ascending chain for $J/K$ be $K=F'_0\subsetneq F'_1 \subsetneq F'_2 \subsetneq \dots \subsetneq F'_{l'}$. Then the unique ascending chain for $M/K$ is $K=F_0 F'_0\subsetneq F_1 F'_1 \subsetneq F_2 F'_2 \subsetneq \dots \subsetneq F_{l'} F'_{l'}\subsetneq F_{l'+1} F'_{l'}\subsetneq \dots  \subsetneq F_{l} F'_{l'}$ for $l\geq l'$ and $K=F_0 F'_0\subsetneq F_1 F'_1 \subsetneq F_2 F'_2 \subsetneq \dots \subsetneq F_{l} F'_{l}\subsetneq F_{l} F'_{l+1}\subsetneq \dots  \subsetneq F_{l} F'_{l'}$ for $l< l'$.
         
     \end{enumerate}
     
     \end{theorem}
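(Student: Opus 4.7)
The plan is to translate both chains into group theory via the direct-product decomposition supplied by Proposition \ref{SGM prop} and then perform an easy induction. By that proposition we identify $G' = G\times S$ and $H' = H\times T$, and since $\tilde{L}$ and $\tilde{J}$ are linearly disjoint over $K$, the Galois correspondence for $\tilde{M}/K$ sends a product subgroup $A\times B \subset G\times S$ to the compositum $\tilde{L}^A\cdot\tilde{J}^B$. The group-theoretic description of the unique descending chain (obtained by iterating Theorem 7.1.1 of \cite{Bhagwat_2025}) sets $H^{(0)} = H$, $H^{(i+1)} = N_G(H^{(i)})$, and takes $N_i = \tilde{L}^{H^{(i)}}$, terminating at index $k$ when $H^{(k)} = G$; dually, the ascending chain is described by iterated normal closures with $H_{[0]} = G$, $H_{[j]} = H^{H_{[j-1]}}$, $F_j = \tilde{L}^{H_{[j]}}$, terminating when $H_{[l]} = H$. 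Analogous descriptions hold for $J/K$ with subgroups $T^{(i)}$ and $T_{[j]}$ of $S$.

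The heart of the proof is the elementary observation that both operations split over a direct product: for $C\subset A\subset G$ and $D\subset B\subset S$,
\[
N_{A\times B}(C\times D)=N_A(C)\times N_B(D),\qquad (C\times D)^{A\times B}=C^A\times D^B,
\]
each of which follows at once from $(g,s)(c,d)(g,s)^{-1}=(gcg^{-1},sds^{-1})$. A straightforward induction on $i$ (respectively $j$) then shows that the $i$-th iterated normalizer of $H\times T$ in $G\times S$ equals $H^{(i)}\times T^{(i)}$, and the $j$-th iterated normal closure equals $H_{[j]}\times T_{[j]}$. Translating back, the $i$-th field in the descending chain of $M/K$ is $\tilde{L}^{H^{(i)}}\tilde{J}^{T^{(i)}}=N_i N'_i$, and the $j$-th field in the ascending chain is $F_j F'_j$.

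To finish, I verify the strictness and the tail behaviour written in the statement. Once (say) the descending chain for $J/K$ terminates at step $k'$, we have $T^{(i)}=S$ and hence $N'_i=K=N'_{k'}$ for all $i\ge k'$, so the $i$-th term of the combined chain collapses to $N_i\,N'_{k'}$ as displayed; strict containment persists because for each $i<\max(k,k')$ at least one of $H^{(i)}\subsetneq H^{(i+1)}$ or $T^{(i)}\subsetneq T^{(i+1)}$ must hold, giving a combined chain of length exactly $\max(k,k')$. The ascending case is symmetric, with $H_{[j]}=H$ (so $F_j=L$) for $j\ge l$ once the $L$-chain has terminated. I do not anticipate any real obstacle: all of the content of the theorem is absorbed by the two product-formulae above, and the remainder is bookkeeping of the indices against the termination of the shorter chain.
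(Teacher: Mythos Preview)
Your proposal is correct, and its core content --- the two product formulae $N_{G\times S}(H\times T)=N_G(H)\times N_S(T)$ and $(H\times T)^{G\times S}=H^G\times T^S$ under the identification $G'\cong G\times S$, $H'\cong H\times T$ of Proposition~\ref{SGM prop} --- is exactly what the paper uses. The difference is only in how the induction is packaged. The paper, after computing the first step $N_1N'_1$ (resp.\ $F_1F'_1$), re-establishes a strong general magnification configuration at that new level and repeats: for part~(1) it observes that $N_1N'_1/K$ is again obtained by SGM from $N_1/K$ through $N'_1/K$; for part~(2), where the base field changes, it invokes the hereditary Proposition~\ref{hereditary} to see that $M/F_1F'_1$ is obtained by SGM from $LF'_1/F_1F'_1$ through $F_1J/F_1F'_1$, and then uses the isomorphism of Galois groups there. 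You instead stay inside the single ambient group $G\times S$ throughout, encoding the whole descending (resp.\ ascending) chain by the iterated normalizers $H^{(i)}$ (resp.\ iterated normal closures $H_{[j]}$) and applying the product formulae once per step. This is a bit more economical --- in particular it sidesteps the appeal to Proposition~\ref{hereditary} in part~(2) --- while the paper's phrasing has the virtue of making explicit that the SGM hypothesis is inherited along both chains.
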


\begin{proof} Let notation be as in Proposition \ref{SGM prop}.
    \begin{enumerate}
        \item By Theorem 7.1.1 (1) in \cite{Bhagwat_2025}, unique intermediate extension in unique descending chain corresponds to subextension fixed by the normalizer subgroup. Now by proof of Proposition \ref{GM}, $N_{G'}(H')= N_G(H)\times N_S(T)$. Since $\tilde{L}^{N_G(H)}=N_1$ and $\tilde{J}^{N_S(T)}=N'_1$. Thus $\tilde{M}^{N_{G'}(H')}=N_1 N'_1$. \smallskip
        
        Now clearly Galois closures of $N_1/K$ and $N'_1/K$ are linearly disjoint over $K$ since Galois closures of $L/K$ and $J/K$ are linearly disjoint over $K$. Thus $N_1 N'_1/K$ is obtained by strong general magnification from $N_1/K$ through $N'_1/K$. Thus proceeding in the same way as in the earlier paragraph, we are done. 

        \smallskip

        \item By Theorem 7.2.1 (3) in \cite{Bhagwat_2025}, unique intermediate extension in unique ascending chain corresponds to subextension fixed by the normal closure subgroup. Now by proof of Proposition \ref{GM}, $H'^{G'}= H^G \times T^S$. Since $\tilde{L}^{H^G}=F_1$ and $\tilde{J}^{T^S}=F'_1$. Thus $\tilde{M}^{H'^{G'}}=F_1 F'_1$.\smallskip

By Proposition \ref{hereditary}, $M/F_1F'_1$ is obtained by strong general magnification from $LF'_1/F_1F'_1$ through $F_1J/F_1F'_1$. By the proof of Proposition \ref{hereditary}, the Galois groups are isomorphic which implies that unique intermediate extension in unique ascending chain for $LF'_1/F_1F'_1$ is $F_2 F'_1/F_1$ and for $F_1J/F_1F'_1$ is $F_1F'_2/F_1F'_1$. Hence by previous paragraph, unique intermediate extension in unique ascending chain for $M/F_1F'_1$ is $(F_2F'_1)(F_1F'_2)=F_2F'_2$. Proceeding in the same way, we are done.\end{enumerate}\end{proof}

\begin{remark}\label{ease notation}
    For ease of notation in statement of Theorem \ref{unique chain SGM}, for $k\geq k'$, one can define $N'_{k'+1}=N'_{k'+2}=\dots =N'_k=N'_{k'}$. Similarly one can define for $k<k',\ l\geq l',\ l<l'$. Thus the field in unique descending chain of $M/K$ at $i$-th step (where $i\leq max\{k,k'\}$) is simply $N_iN'_i$. Similarly we have for unique ascending chain.  
\end{remark}

\begin{theorem}
    \label{uniq chain coinc GM}

    Consider a nontrivial extension $M/K$ and suppose $M/K$ is obtained by strong general magnification from  $L/K$ through $J/K$. Further assume that one of the fields in the unique descending chain of $M/K$ which is different from both $M$ and $K$, coincides with one of the fields in the unique ascending chain of $M/K$. Then atleast one of the following is true.

    \begin{enumerate}
        \item $L/K$ is primitive.

        \item The magnification is nontrivial and $J/K$ is primitive.

        \item The magnification is non trivial and $r_K(J)=1$ and $t_K(L)=1$ and unique ascending chain of $J/K$ terminates at $J$ and unique descending chain of $L/K$ terminates at $K$.

        \item The magnification is non trivial and $r_K(L)=1$ and $t_K(J)=1$ and unique ascending chain of $L/K$ terminates at $L$ and unique descending chain of $J/K$ terminates at $K$.
\end{enumerate}
\end{theorem}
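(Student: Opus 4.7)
The plan is to combine the structural description of the unique chains of $M/K$ from Theorem \ref{unique chain SGM} with a Galois-correspondence argument over the product group $G\times S$ from Proposition \ref{SGM prop}, reducing the hypothesized coincidence in the $M$-chains to simultaneous coincidences in the unique chains of $L/K$ and of $J/K$, and then invoking Theorem \ref{unique chain and primitive} in the non-degenerate subcases.

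First, using Theorem \ref{unique chain SGM} and the extended-notation convention of Remark \ref{ease notation}, every field in the unique descending chain of $M/K$ has the form $N_a N'_b$ and every field in the unique ascending chain has the form $F_c F'_d$, so the hypothesis gives an equality $N_a N'_b = F_c F'_d$ whose common value is neither $M$ nor $K$. The crucial reduction is that such an equality forces $N_a=F_c$ and $N'_b=F'_d$. Writing $N_a=\tilde{L}^{H_a}$, $N'_b=\tilde{J}^{T_b}$, $F_c=\tilde{L}^{U_c}$ with $U_c\unlhd G$, and $F'_d=\tilde{J}^{V_d}$ with $V_d\unlhd S$, the isomorphism ${{\rm Gal}}(\tilde{M}/K)\cong G\times S$ of Proposition \ref{SGM prop} identifies the subgroup fixing $N_aN'_b$ with $H_a\times T_b$ and the subgroup fixing $F_cF'_d$ with $U_c\times V_d$. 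Equality of fixed fields forces $H_a\times T_b=U_c\times V_d$ in $G\times S$, hence $H_a=U_c$ and $T_b=V_d$, i.e.\ $N_a=F_c$ and $N'_b=F'_d$.

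With this reduction I split into cases. If $N_a\notin\{L,K\}$, then $N_a=F_c$ lies in both unique chains of $L/K$ and is distinct from $L$ and $K$, so Theorem \ref{unique chain and primitive} gives conclusion (1). Symmetrically, if $N'_b\notin\{J,K\}$, then $J/K$ is primitive, and the magnification is nontrivial since $K\subsetneq N'_b\subsetneq J$ forces $J\neq K$, yielding conclusion (2). Otherwise $(N_a,N'_b)\in\{(L,J),(L,K),(K,J),(K,K)\}$; excluding $(L,J)$ (which gives $M$) and $(K,K)$ (which gives $K$), only the symmetric subcases $(L,K)$ and $(K,J)$ remain.

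I handle the subcase $(N_a,N'_b)=(L,K)$; the other follows by swapping the roles of $L$ and $J$. Since the $L$-descending chain is strictly decreasing from $N_0=L$, the identity $N_a=L$ forces $a=0$, and matching with the $M$-chain position, where we must have $(a,b)\neq(0,0)$, forces $k=0$, i.e.\ $r_K(L)=1$. Analogously $F'_d=K$ forces $d=0$, while $F_c=L$ forces $c=l$ with $l\geq 1$ (since $L\neq K$); matching at the ascending $M$-chain position then forces $l'=0$, i.e.\ $t_K(J)=1$. The remaining identities $F_l=L$ and $N'_{k'}=K$ say exactly that the ascending chain of $L/K$ terminates at $L$ and the descending chain of $J/K$ terminates at $K$, while $k'\geq 1$ ensures $J\neq K$, giving nontriviality. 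These are precisely the four statements of conclusion (4); the swapped subcase yields conclusion (3). The main obstacle is the bookkeeping in these degenerate subcases: one must correctly identify which positions in the extended $M$-chain are admissible under Remark \ref{ease notation} and translate the position constraints into the cluster-size and ascending-index conditions, but once this is in place the argument is a direct synthesis of Theorem \ref{unique chain SGM}, Proposition \ref{SGM prop}, and Theorem \ref{unique chain and primitive}.
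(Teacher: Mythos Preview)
Your proof is correct and follows essentially the same strategy as the paper's: reduce the coincidence $N_iN'_i=F_jF'_j$ to the componentwise identities $N_i=F_j$ and $N'_i=F'_j$ (the paper does this by intersecting with $\tilde L$ and $\tilde J$, you via the product-group Galois correspondence), and then case-split on whether $N_i\in\{L,K\}$ and $N'_i\in\{J,K\}$, invoking Theorem \ref{unique chain and primitive} in the nondegenerate cases and reading off the chain-termination and $r_K,t_K$ conditions in the degenerate ones. One small slip that does not affect the argument: the subgroups $U_c$ and $V_d$ fixing $F_c$ and $F'_d$ are \emph{not} in general normal in $G$ and $S$ for $c,d\geq 2$ (only $U_1=H^G$ and $V_1=T^S$ are), but you never actually use that normality, so the proof stands.
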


\begin{proof}
    By Theorem \ref{unique chain SGM}, we know the structure of unique chains of $M/K$ in terms of unique chains of $L/K$ and $J/K$. Following the notation in Remark \ref{ease notation}, we have $N_iN'_i=F_jF'_j\neq M, K$ for some $i$ and some $j$. Now because of $\tilde{L}\cap \tilde{J}=K$, we have $N_i=\tilde{L}\cap N_i N'_i=\tilde{L}\cap F_jF'_j=F_j$ and similarly have $N'_i=F'_j$. We also have that $N_iN'_i\neq M$ is equivalent to $N_i\neq L$ or $N'_i\neq J$. Also $N_iN'_i\neq K$ is equivalent to $N_i\neq K$ or $N'_i\neq K$. Thus atleast one of the following cases must be true.\smallskip

    \begin{enumerate}

\item We have $N_i\neq L,K$. Since $N_i=F_j$. Thus by Theorem \ref{unique chain and primitive}, $L/K$ is primitive.\smallskip

\item We have $N'_i\neq J,K$. Hence $J\neq K$. Since $N'_i=F'_j$. Thus by Theroem \ref{unique chain and primitive}, $J/K$ is primitive.

\smallskip

\item We have $N_i\neq L$ but $N'_i=J$ and we have $N'_i\neq K$ but $N_i=K$. Now clearly $J=N'_i\neq K$. Thus the magnification is nontrivial. Also $i>0$ as $N_i\neq L$. Since $N'_i=J$ for $i>0$, we have that unique descending chain of $J/K$ is singleton $J=N'_0$. Hence by Theorem 7.1.1 in \cite{Bhagwat_2025}, we get $r_K(J)=1$. Now since $N_i=K$, so the unique descending chain of $L/K$ terminates at $K$. Now $N_i=F_j$ and $N'_i=F'_j$. Thus we have $F_j\neq L$ but $F'_j=J$ and we have $F'_j\neq K$ but $F_j=K$. Now $j>0$ as $F'_j\neq K$. Since $F_j=K$ for $j>0$, we have that unique ascending chain of $L/K$ is singleton $K=F_0$. Hence by Theorem 7.2.1 in \cite{Bhagwat_2025}, we get $t_K(L)=1$. Now since $F'_j=J$, so the unique ascending chain of $J/K$ terminates at $J$.

\smallskip

\item We have $N'_i\neq J$ but $N_i=L$ and we have $N_i\neq K$ but $N'_i=K$. This case can be dealt similarly as the previous case.\end{enumerate}\end{proof}

\section{General Primitive Inverse Cluster Size Problem}
\label{general primitive}

\subsection{General Primitive Extensions}
We define the following notion.

\begin{definition}
    An extension $L/K$ is called general primitive if it is not obtained by a nontrivial strong general magnification from any subextension over $K$.
\end{definition}

\begin{remark}
    Clearly General Primitive implies Primitive. But the converse is false.  By results in \cite{volklein1996groups}, we have alternating group ${\mathfrak A}_{n}$ for $n\geq 5$ to be realizable as a Galois group for infinitely many pairwise linearly disjoint Galois extensions over $\mathbb{Q}$. Thus by Lemma \ref{perm}, we have degree $n$ extensions $L/\mathbb{Q}$ and $J/\mathbb{Q}$ such that their Galois closures are linearly disjoint over $\mathbb{Q}$ that is $\tilde{L}\cap \tilde{J}=\mathbb{Q}$ and ${{\rm Gal}}(\tilde{L}/\mathbb{Q})$ and ${{\rm Gal}}(\tilde{J}/\mathbb{Q})$ are isomorphic to $\mathfrak A_{n}$. Thus $LJ/\mathbb{Q}$ is obtained by nontrivial strong general magnification from $L/\mathbb{\mathbb{Q}}$ through $J/\mathbb{Q}$. Hence $LJ/\mathbb{Q}$ is not general primitive. \smallskip

    We claim that $LJ/\mathbb{Q}$ is primitive. We have that Galois closure of $LJ/\mathbb{Q}$ is $\tilde{L}\tilde{J}$. We also have that ${{\rm Gal}}(\tilde{L}/L)$ and ${{\rm Gal}}(\tilde{J}/J)$ are isomorphic to $\mathfrak A_{n-1}$. Thus ${{\rm Gal}}(\tilde{L}\tilde{J}/\mathbb{Q})\cong \mathfrak A_{n}\times \mathfrak A_{n}$ and ${{\rm Gal}}(\tilde{L}\tilde{J}/LJ)\cong \mathfrak A_{n-1}\times \mathfrak A_{n-1}$. Since $\mathfrak A_{n}$ is simple non-abelian for $n\geq 5$. Thus the only proper nontrivial normal subgroups of $\mathfrak A_{n}\times \mathfrak A_{n}$ are $1\times \mathfrak A_{n}$ and $\mathfrak A_{n}\times 1$ both of which do not contain $\mathfrak A_{n-1}\times \mathfrak A_{n-1}$. Hence by Proposition \ref{simple prop} we are done.

\begin{remark}
    Observe that if $L/K$ is Galois then by Proposition \ref{SGM prop} (5) and Corollary 4.1.8 in \cite{Bhagwat_2025}, $L/K$ is primitive if and only if $L/K$ is general primitive. 
\end{remark}
    
\end{remark}

The following follows from Proposition \ref{SGM prop} (2).

\begin{proposition}\label{gen prim prop}
    Consider $L/K$. Let $G={{\rm Gal}}(\tilde{L}/K)$. If $G$ has less than two proper nontrivial normal subgroups then $L/K$ is general primitive. In particular if $G$ is simple then $L/K$ is general primitive.
\end{proposition}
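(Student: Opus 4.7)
The plan is to argue by contradiction using the Galois-theoretic criterion for strong general magnification provided by Proposition \ref{SGM criterion}. First I would assume that $L/K$ is not general primitive, so by definition $L/K$ is obtained by a nontrivial strong general magnification from some subextension. Applying Proposition \ref{SGM criterion} in the case $M=L$ extracts from this the decomposition $G = \mathrm{Gal}(\tilde L/K) \cong A \times B$ with both $A$ and $B$ nontrivial (the information on the subgroup $H = \mathrm{Gal}(\tilde L/L)$ is not needed for our purpose).

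Next I would observe that the factor subgroups $A \times \{e_B\}$ and $\{e_A\} \times B$ are both proper normal subgroups of $A \times B$, they are both nontrivial since $A$ and $B$ are both nontrivial, and they are manifestly distinct (for instance, their intersection is trivial while each of them is nontrivial). Transporting these back along the isomorphism $G \cong A \times B$ yields two distinct proper nontrivial normal subgroups of $G$, contradicting the hypothesis that $G$ has fewer than two such subgroups. This forces $L/K$ to be general primitive.

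The final \emph{in particular} clause is then immediate: if $G$ is simple, then $G$ has no proper nontrivial normal subgroups at all, so in particular it has fewer than two, and the previous paragraph applies. I do not expect any serious obstacle here, since the entire content of the statement amounts to a direct translation between the structural condition on $G$ and the existence of a nontrivial direct-product decomposition; all the real work has already been done in Proposition \ref{SGM prop} (2) and its reformulation as Proposition \ref{SGM criterion}.
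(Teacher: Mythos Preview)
Your argument is correct and is essentially the same as the paper's: the paper simply records that the statement follows from Proposition~\ref{SGM prop}(2), and your use of Proposition~\ref{SGM criterion} (itself a reformulation of \ref{SGM prop}(2)) to extract the decomposition $G\cong A\times B$ with $A,B$ nontrivial, and then exhibit the two distinct proper nontrivial normal subgroups $A\times\{e\}$ and $\{e\}\times B$, just spells out that one-line deduction.
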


\subsection{General Primitive Inverse Cluster Size Problem}\hfill
\medskip

We state the following problem.\smallskip

\textbf{Problem}: Let $K$ be a number field. Let $n>2$ and $r|n$ and $r<n$. Does there exist a general primitive extension $L/K$ of degree $n$ with cluster size $r_K(L)=r$?\smallskip

We will denote the above as $(K,n,r)$.    

\begin{remark}
    For the Galois case $(K,n,n)$ we have the same conclusions as Remark \ref{Galois case remark} due to Proposition \ref{SGM prop} (5). 
\end{remark}

\begin{remark}
   Note that Theorem \ref{Prim inv clus} was the Primitive variant of the problem for the cases $(K,n,r)$ where $r<n$ which was solved by the approach of Theorem 3.1.1 in \cite{Bhagwat_2025} in the light of Theorem \ref{unique chain and primitive}. But the General variant of the problem cannot be easily tackled with the same approach because of the conclusions of Theorem \ref{uniq chain coinc GM}. So in this article we take a case by case approach for the General variant and establish some interesting cases. We begin with some lemmas.
\end{remark}

\begin{lemma}[Final Proposition , \cite{perlisroots}] \label{perm} Let $G$ be a transitive subgroup of ${\mathfrak S}_n$ for some $n$. If there exists a finite Galois extension of a field $K$ with Galois group isomorphic to $G$, then there exists an irreducible polynomial $f$ over $K$ of degree $n$
and a labelling of the roots of $f$ so that the Galois group of $f$, viewed as a group permuting roots of $f$, is precisely $G$.
\end{lemma}

\begin{lemma}[Lemma 6.6 (2), \cite{jaiswal2025rootcapacityintersectionindicium}] \label{lemma 6.6}
 Let $K$ be a number field. Given $L/K$, for any solvable group $G$, we have a Galois extension $L'/K$ such that ${{\rm Gal}}(L'/K)=G$ and $L'\cap L=K$.
\end{lemma}

We have a positive answer to the above problem in the following cases.\medskip

\textbf{Case 1}: $(K,\ ^nP_k, k! )$ where $K$ is any number field and $n>2$ and $1 \leq k \leq n-2$. In particular $(K, n, 1)$ where $K$ is any number field and $n>2$. 

\begin{proof}
     By results in \cite{volklein1996groups} on hilbertian fields, we have ${\mathfrak S}_{n}$ to be realisable as a Galois group over $K$. Thus by Lemma \ref{perm}, there exist polynomial $f$ over $K$ of degree $n$ with Galois group ${\mathfrak S}_{n}$. Suppose the roots of $f$ are $\alpha_i\in \bar{K}$ for $1\leq i\leq n$. For $1 \leq k \leq n-2$, let $L_k=K(\alpha_1,\dots, \alpha_k)$. From Theorem 3 in \cite{krithika2023root}, we have $[L_k:K]=\ ^nP_k$ and $r_K(L_k)=k!$. \smallskip

     Now Galois closure for each $L_k/K$ for $1\leq k\leq n-2$ is $L_{n-1}$ with ${{\rm Gal}}(L_{n-1}/K)$ as $\mathfrak S_n$. For $n\neq 4$, there is only one proper nontrivial normal subgroup of $\mathfrak S_n$. Hence we are done by Proposition \ref{gen prim prop}. For $n=4$, there are two proper nontrivial normal subgroups of $\mathfrak S_4$ but one is contained in the other hence $L_k/K$ is general primitive by Proposition \ref{SGM prop} (2).\end{proof}


\textbf{Case 2}: $K$ is any number field.

\begin{enumerate}

\item $(K, 2n, 2)$ where $n\geq 2$.

\item $(K, 4 {n\choose k}, 4)$ where $n>2$ and $1<k<n-1$.
\end{enumerate}

\begin{proof} 

First assume $n>2$. We have $G={\mathfrak S}_{n}$ to be realisable as a Galois group over $K$. For $1\leq k\leq n-1$, let $L/K$ be the fixed field of subgroup $H=\mathfrak{A}_{k}\times \mathfrak{A}_{n-k}$. Now $\mathfrak{A}_{k}\times \mathfrak{A}_{n-k}\unlhd \mathfrak{S}_{k}\times \mathfrak{S}_{n-k}$ and $\mathfrak{S}_{k}\times \mathfrak{S}_{n-k}$ is a maximal subgroup of $\mathfrak{S}_{n}$. Thus $N_{G}(H)=\mathfrak{S}_{k}\times \mathfrak{S}_{n-k}$. 

\begin{enumerate}

\item For $k=1$ or $n-1$, $[L:K]=[\mathfrak{S}_{n}:\mathfrak{A}_{n-1}]=2n$ and $r_K(L)=[N_{G}(H):H]=2$. 

\item  For $1<k<n-1$, $[L:K]=[\mathfrak{S}_{n}:\mathfrak{A}_{k}\times \mathfrak{A}_{n-k}]=4 {n\choose k}$ and $r_K(L)=4$.

\end{enumerate}

Now the case $(K,4,2)$. The dihedral group $D_4$ of order $8$ is solvable hence is realizable over any number field by Lemma \ref{lemma 6.6}. Thus by the Lemma \ref{perm}, there exists $L/K$ with degree $4$ with Galois group of Galois closure as $D_4$. It is easy to see that $r_K(L)=2$. Now since intersection of any two nontrivial normal subgroups of $D_4$ is nontrivial (as the center of $D_4$, a subgroup of order $2$, is contained there), $L/K$ is general primitive by Proposition \ref{SGM prop} (2).\end{proof}

\smallskip

\textbf{Case 3}: $(\mathbb{Q},(p+1)r,r)$ where $p$ is a prime and $2r\ |\ p-1$ and $r\geq 3$.

\begin{proof}

Since $r\geq3$, we can assume $p\geq 5$. By Theorem 1.4 in \cite{zywina2023modular}, $PSL_2(\mathbb{F}_p)$ is realizable as a Galois group over $\mathbb{Q}$ for all primes $p\geq 5$. For any prime $p\geq 5$, let $G=PSL_2(\mathbb{F}_p)$. Now $|G| = (p-1)p(p+1)/{2}$.\smallskip

Now consider the case $r=(p-1)/2$. We know that $C_p\rtimes C_{(p-1)/2}$ is a maximal subgroup of $PSL_2(\mathbb{F}_p)$. Let $H=C_p$. Thus $N_G(H)=C_p\rtimes C_{(p-1)/2}$. Thus $[N_G(H):H]=r$ and $[G:H]=(p+1)(p-1)/2=(p+1)r$.\smallskip

Now let $2r\ |\ p-1$ and $p-1 > 2r$. Let $p-1=rk$ and $c\in \mathbb{F}_p^{\times}$ be an order $k$ element. Consider the following subgroup of $SL_2(\mathbb{F}_p)$.
\[
H =
\left\{
\begin{pmatrix}
  c^l & b \\[4pt]
  0 & c^{-l}
\end{pmatrix}
\; : \; 0\leq l\leq k-1\  ,\  b \in \mathbb{F}_p
\right\}.
\]

The normalizer of $ H $ in $SL_2(\mathbb{F}_p)$ is the following subgroup since it is a maximal subgroup of $SL_2(\mathbb{F}_p)$.
\[ \left\{
\begin{pmatrix}
  a & b \\[4pt]
  0 & a^{-1}
\end{pmatrix}
\; : \;
a \in \mathbb{F}_p^{\times},\; b \in \mathbb{F}_p
\right\}.
\]
\smallskip

Now since $2\mid k$, so $\{\pm I\}\subset H$. Clearly $\{\pm I\}\unlhd SL_2(\mathbb{F}_p)$. Let $H'=H/\{\pm I\}\subset PSL_2(\mathbb{F}_p)=G$. Clearly $N_G(H')=N_{SL_2(\mathbb{F}_p)}(H)/\{\pm I\}$. Hence $[N_G(H'):H']=r$ and $[G:H']=r(p+1)$. Since $PSL_2(\mathbb{F}_p)$ is simple, we are done by Proposition \ref{gen prim prop}.
\end{proof}

\smallskip

\textbf{Case 4}:
 $K$ is any number field and $p$ is a prime and $r\in \mathbb{N}$ such that $r\ |\ p-1$ and $p-1>2r$. 

\begin{enumerate}
    \item $(K,pr,r)$ where $p\equiv 1 \pmod 4 $.

    \item  $(K,pr,r)$ where $p\equiv 3 \pmod 4$ and $r$ is odd.
\end{enumerate}

\begin{proof}

    Consider the following subgroup of $SL_2(\mathbb{F}_p)$. \[
G=B_2(\mathbb{F}_p)=
\left\{
\begin{pmatrix}
  a & b \\[4pt]
  0 & a^{-1}
\end{pmatrix}
\; : \;
a \in \mathbb{F}_p^{\times},\; b \in \mathbb{F}_p
\right\}
\cong \mathbb{F}_p \rtimes \mathbb{F}_p^{\times} \]

The order of $G$ is $p(p-1)$. Let $p-1=rk$ and $c\in \mathbb{F}_p^{\times}$ be an order $k$ element.

\[H=\left<
\begin{pmatrix}
  c & 0 \\[4pt]
  0 & c^{-1}
\end{pmatrix}
\right>
\cong \ <(0,c)> \] 
\smallskip

Consider the normalizer of $H$ in $B_2(\mathbb{F}_p)$ which is $N_{B_2(\mathbb{F}_p)}(H)\cong \{0\}\rtimes \mathbb{F}_p^{\times}$ since $k=(p-1)/r>2$. So $[N_{G}(H):H]=r$ and $[G:H]=pr$. Since $G$ is solvable hence is realizable over any number field by Lemma \ref{lemma 6.6}. Thus we have existence of $L/K$ (precisely the fixed field of $H$) with given degree and cluster size. We also have $\cap_{g\in G}\  gHg^{-1}=\{1\}$ as $k>2$. Thus the Galois group of Galois closure of $L/K$ is $G$ itself. \smallskip

We claim that $L/K$ is general primitive. Suppose not, then it is obtained by a nontrivial strong general magnification from a subextension over $K$. By Proposition \ref{SGM prop}, $G$ is a direct product of nontrivial subgroups. Thus $\mathbb{F}_p \rtimes \mathbb{F}_p^{\times}=AB$ where $A$ and $B$ are nontrivial normal subgroups of $\mathbb{F}_p \rtimes \mathbb{F}_p^{\times}$ such that $A\cap B={1}$. Since $|AB|=p(p-1)=|A||B|$, we can assume without loss of generality that $p\mid |A|$ and $p\nmid |B|$. Thus $\mathbb{F}_p\rtimes \{1\}\subset A$. Since $A$ is a normal subgroup of $\mathbb{F}_p \rtimes \mathbb{F}_p^{\times}$. Hence $A=\mathbb{F}_p\rtimes C$ where $C$ is a subgroup of $\mathbb{F}_p^{\times}$.\smallskip

We also have that elements of $B$ centralize elements of $A$ and that $p\nmid |B|$ and $A\cap B=1$. Thus one can show that $B=\ <(0,-1)>\ \cong\  <-I>$. Thus $|C|=(p-1)/2$. We know that unique index-$2$ subgroup of $\mathbb{F}_p^{\times}$ is $(\mathbb{F}_p^{\times})^2$. Thus $C=(\mathbb{F}_p^{\times})^2$. Now consider the case $p\equiv 1\pmod 4$. We have that $-1\in C$. Thus $(0,-1)\in A$ which contradicts $A\cap B=1$. \smallskip

Now consider the case $p\equiv 3\pmod 4$ and $r$ is odd. Thus $-1\not \in C$. Thus we have $\mathbb{F}_p \rtimes \mathbb{F}_p^{\times}$ is direct product of subgroups $\mathbb{F}_p\rtimes (\mathbb{F}_p^{\times})^2$ and $<(0,-1)>$. Also $\mathbb{F}_p^{\times}$ is direct product of $(\mathbb{F}_p^{\times})^2$ and $<-1>$. Now since $(p-1)/2$ is odd and $r$ is odd, we have $k=(p-1)/r$ to be even and $k/2$ to be odd. Thus $<c>$ is direct product of subgroups $<c^2>$ and $<c^{k/2}>\ =\ <-1>$. Thus $H$ is a direct product of subgroups $<(0,c^2)>\ \subseteq \mathbb{F}_p\rtimes (\mathbb{F}_p^{\times})^2$ and $<(0,-1)>\ \subseteq \ <(0,-1)>$. Thus by Proposition \ref{SGM criterion}, we have a contradiction to the nontriviality.\end{proof}

\begin{remark} A similar argument fails for the case $(K,pr,r)$ where $p\equiv 3\pmod 4$ and $r$ is even. In that case $k$ is odd. Thus $H$ is a direct product of subgroups $<(0,c)>\ \subseteq \mathbb{F}_p\rtimes (\mathbb{F}_p^{\times})^2$ and $\{(0,1)\} \ \subseteq \ <(0,-1)>$. By Proposition \ref{SGM criterion}, $L/K$ is indeed obtained by a nontrivial strong general magnification from a subextension over $K$. In fact it is obtained by a nontrivial strong cluster magnification from a subextension over $K$.
\end{remark}

\smallskip

\textbf{Case 5}: $(K,2n,n)$ where $K$ is any number field and $n>2$ is odd.


\begin{proof}

Consider the dihedral group of order $2n$ and cyclic group of order $n$.\[
D_n = \langle e,f \mid e^n=f^2=1,\ fef=e^{-1}\rangle,
\quad
C_n=\langle g\rangle .
\]

Consider the cyclic subgroup $H=\langle (e,g)\rangle \subset D_n \times C_n=G$. Then $|H|=n$. By computing, \[
N_{G}(H)=\{(e^i,g^j)\mid 0\le i,j < n\}= C_n\times C_n .
\]

Hence $[N_{G}(H):H] = n$ and $[G:H] = 2n$. Since $G=D_n\times C_n$ is solvable, it is realizable over any number field by Lemma \ref{lemma 6.6}. Thus we have existence of $L/K$ (precisely the fixed field of $H$) with given degree and cluster size. We also have $\cap_{g\in G}\  gHg^{-1}=\{1\}$. Thus the Galois group of Galois closure of $L/K$ is $G$ itself. \smallskip

We claim that $L/K$ is general primitive. Suppose not, then it is obtained by a nontrivial strong general magnification from a subextension over $K$. By Proposition \ref{SGM prop}, $G$ is a direct product of nontrivial subgroups. Thus $D_n\times C_n=AB$ where $A$ and $B$ are nontrivial normal subgroups of $D_n\times C_n$ such that $A\cap B={1}$. Also by the same proposition, $H$ is a direct product of subgroups $H=A'B'$ such that $A'\subset A$ and $B'\subset B$ with $[A:A']>1$ and $[B:B']>1$. Since $|H|=n$, we can assume that $|A'|=a$ and $|B'|=b$ where $n=ab$ and $gcd(a,b)=1$. Since $H$ is cyclic we have $A'=\langle (e^b,g^b) \rangle$ and $B'=\langle (e^a,g^a) \rangle$.

\smallskip

Now since $A'\subset A$, we have $B\subset C_G(A')$ where $C_G(A')$ denotes the centralizer of $A'$ in $G$. Similarly $A\subset C_G(B')$. Suppose $a,b>1$. Then by computing, $C_G(A')=C_G(B')=C_n\times C_n$ as $n$ is odd. This gives $AB\subset C_n\times C_n$ which is a contradiction. Thus one of $a$ and $b$ has to be $1$. Suppose without loss of generality $a=n$ and $b=1$. Then $A'=H$ and $B'=1$. Since $A\cap B=1$, so $H\cap B=1$. Also $B\subset C_n\times C_n$ is a nontrivial subgroup. Hence $(e^i,g^j) \in B$ for some $0\leq i,j<n$ with $i\neq j$. \smallskip

Since $|B|\mid n^2$ and $n$ is odd, we have $2\mid |A|$. By Cauchy's theorem there is an order $2$ element in $A$. Thus as $n$ is odd, we have $(e^kf,1)\in A$ for some $0\leq k< n$. Now $A\subset C_G(B)$. So $(e^kf,1)(e^i,g^j)=(e^i,g^j)(e^kf,1)$. Thus $e^kfe^i=e^ie^kf$. Thus $e^{-i}=e^i$. Since $n$ is odd, $i=0$. Thus $(1,g^j) \in B$ for some $0<j<n$. Note that $(1,g^{2j})\in B$ which is non-identity as $n$ is odd. Now consider the following element in $A$ which contradicts $A\cap B=1$. \[(e^kf,1)(e^j,g^j)(e^kf,1)(e^j,g^j)=(e^kfe^{k+j}fe^j, g^{2j})=(fe^{-k}e^{k+j}e^{-j}f,g^{2j})=(f^2,g^{2j})=(1,g^{2j}).\]

\end{proof}

\section{Totally Real Inverse Cluster Size Problem}\label{tot real}

Recall the following.
\begin{definition}
    A number field is called totally real if for each embedding of the number field into $\mathbb{C}$, the image lies inside $\mathbb{R}$.
\end{definition}

\begin{proposition}\label{comp}
   Subfields of a totally real number field are totally real. Compositum of totally real number fields is totally real.
\end{proposition}

\begin{corollary}\label{tot real cor}
    Let $K$ be a totally real number field. Consider an extension $M/K$ and its Galois closure $\tilde{M}/K$. Then $M$ is totally real if and only if $\tilde{M}$ is totally real.
\end{corollary}

\begin{proof}

Now $M$ is a subfield of $\tilde{M}$ and $\tilde{M}$ is compositum of conjugates of $M$ over $K$ (compositum of images of embeddings of $M$ into $\mathbb{C}$ fixing $K$). By Proposition \ref{comp} we are done.
\end{proof}

\subsection{Totally Real Inverse Cluster Size Problem}\label{tot real subsec}\hfill \medskip

We establish the following completely in this section.

\begin{theorem}\label{Tot real var} (Totally Real Inverse Cluster Size Problem) Let $K$ be a totally real number field and $n>2$ and $r\mid n$. There exists an $L/K$ where $L$ is totally real such that $[L:K]=n$ and $r_K(L) = r$. 
\end{theorem}


















\smallskip

This theorem will be proved as a consequence of the following two lemmas. We begin with modifying Lemma 2 in \cite{krithika2023root} for our purpose.

  \begin{lemma}\label{vanchi lemma}
      Let $K$ be a totally real number field and consider any finite extension $L/K$. Let $d\geq 2$. Then there exist infinitely many cyclic Galois extensions $F/K$ of degree $d$ where $F$ is totally real such that $L$ and $F$ are linearly disjoint over $K$.
  \end{lemma}

\begin{proof}
    Case 1 : $K=\Q$ :
	Let $\Delta_L\in\Z$ be the discriminant of the number field
$L$. For a prime $p$ not dividing $\Delta_L$, consider the 
cyclotomic extension $M=\Q(\zeta_p)$ where $\zeta_p$ is primitive $p$-th root  of unity. Now $p$ is the only prime ramified in $M$ but it remains unramified in $L$. So $L\cap M=\Q$. Now $M'=\mathbb{Q}(\zeta_p+\zeta_p^{-1})$ is a totally real number field inside $M$ with $[M':\mathbb{Q}]=(p-1)/2$. We also have that $L\cap M'=\Q$. So $L$ and $M'$ are linearly disjoint over $\mathbb{Q}$ as $M'/\mathbb{Q}$ is Galois (in fact cyclic).\smallskip

Now for a given $d\geq 2$, we can find infinitely many primes $p$ such that $p>\Delta_L$ and $p\equiv 1 \pmod {2d}$ by Dirichlet's theorem on  
arithmetic progressions.
For such $p$ the extensions $M'/\mathbb{Q}$ will have a cyclic Galois subextension $F/\mathbb{Q}$ of degree $d$. The field $F$ is totally real by Proposition \ref{comp} and $L$ and $F$ are linearly disjoint over $\Q$.
\smallskip

Case 2 : For any totally real number field $K$ :
	For given  extension $L/K$, we get extensions $F_1/\Q$ by Case 1 such that $L$ and $F_1$ are linearly disjoint over $\Q$ and $F_1$ is totally real. Thus the extensions $L$ and $F=K F_1$ will be  linearly disjoint over $K$. Also $F=K F_1$  is totally real by Proposition \ref{comp} and $F/K$ is Galois and has the same degree as $F_1/\Q$.\end{proof}

\begin{lemma}\label{Sn}


Let $K$ be a totally real number field. 

\begin{enumerate}
\item For every $n\geq 2$, there exists a Galois $L/K$ where $L$ is totally real such that ${{\rm Gal}}(L/K)\cong \mathfrak S_n$.\smallskip

\item 

 There exists a degree $4$ extension $L/K$ such that $L$ is totally real and ${{\rm Gal}}(\tilde{L}/K)\cong D_4$ where $D_4$ is the dihedral group of order 8.  \smallskip

 \item  
For any odd prime $p$, there exists an extension $L/K$ such that $L$ is totally real and ${{\rm Gal}}(L/K) \cong D_p \times C_p$.

\end{enumerate}

\end{lemma}

\begin{proof}
\hfill
\begin{enumerate}
\item Let $\Delta_K\in\Z$ be the discriminant of the totally real number field
$K$. By Theorem 1.1 in \cite{kedlaya2012construction}, we have infinitely many monic irreducible polynomials $f(x)\in \mathbb{Q}[x]$ of degree $n$ with integer coefficients having all $n$ roots to be real such that the discriminant of $f(x)$ i.e. $\Delta_f$ is coprime to $\Delta_K$ and ${{\rm Gal}}(\mathbb{Q}_f/\mathbb{Q})\cong \mathfrak S_n$ where $\mathbb{Q}_f$ is splitting field of $f$ over $\mathbb{Q}$. Let $M=\mathbb{Q}(\alpha)$ where $\alpha$ is a root of $f$. Thus $M$ is totally real and ${{\rm Gal}}(\tilde{M}/\mathbb{Q})\cong \mathfrak S_n$. Also $\tilde{M}$ is totally real by Corollary \ref{tot real cor}. \smallskip

We have the well known relation $\Delta_f = \Delta_{M}\cdot [\mathcal{O}_M : \mathbb{Z}[\alpha]]^2$ where $\mathcal{O}_M$ is the ring of integers of $M$ and $\Delta_M\in\Z$ is the discriminant of the totally real number field $M$. Thus $\Delta_M$ and $\Delta_K$ are coprime. Let $\Delta_{\tilde{M}}\in\Z$ be the discriminant of the totally real number field $\tilde{M}$. We know that a prime ramifies in $M/\mathbb{Q}$ if and only if it ramifies in $\tilde{M}/\mathbb{Q}$. Thus a prime $p\mid \Delta_M\iff p\mid \Delta_{\tilde{M}}$. Hence $\Delta_{\tilde{M}}$ and $\Delta_K$ are also coprime. Hence $K\cap \tilde{M}=\mathbb{Q}$. Let $L=K\tilde{M}$. So $L$ is totally real by Proposition \ref{comp}. Also ${{\rm Gal}}(L/K)={{\rm Gal}}(K\tilde{M}/K)\cong {{\rm Gal}}(\tilde{M}/\mathbb{Q})\cong \mathfrak S_n$. \smallskip

\item  By Theorem 3.2 in \cite{vanchinathan2023exceptional}, we have a family of irreducible degree $4$ polynomials $f_t(x)\in \mathbb{Q}[x]$ for integers $t\geq 7$ given by \[f_t(x)=x^4-tx^3-3x^2+tx+1\] such that the splitting field has Galois group $D_4$. By Lemma 2.2 in \cite{vanchinathan2023exceptional}, \[\Delta_{f_t}=(t^2+4)^2 (4t^2+25)\] and all $4$ roots of $f_t$ are real. Observe that a prime $p$ divides $\Delta_{f_t}$ if and only if $t^2\equiv -4 \pmod p$ or $4t^2\equiv -25 \pmod p$. Thus there are at most $4$ residue classes modulo $p$ for $t$ such that $p\mid \Delta_{f_t}$. Hence if $p>4$ then there exists a residue class modulo $p$ for $t$ such that $p\nmid\Delta_{f_t}$. Also for the cases $p=2$ and $p=3$ its easily seen that  the residue classes $1\pmod 2$ and $1\pmod 3$ respectively for $t$ would ensure that $p\nmid \Delta_{f_t}$.\smallskip 
 
 Now consider the primes dividing the discriminant $\Delta_K$ of $K/\mathbb{Q}$ and choose a residue class for $t$ for each of these primes which ensures $p\nmid \Delta_{f_t}$. Thus by CRT, there are infinitely many integers $t\geq 7$ such that $p\nmid \Delta_{f_t}$ for all primes $p$ dividing $\Delta_K$. Thus we have $\Delta_{f_t}$ and $\Delta_K$ to be coprime. Now by a very similar argument as in part (1), we have existence of a degree $4$ extension $L/K$ such that $L$ is totally real and ${{\rm Gal}}(\tilde{L}/K)\cong D_4$. 

 \smallskip
 Under the assumption that discriminant $\Delta_K$ is odd, we can actually give an alternate proof for the above. Let $p_1,p_2\dots, p_m$ be all the distinct odd primes dividing $\Delta_K$. We have infinitely many primes $p$ such that $p>3$ and $p\equiv 3\pmod {4p_1p_2\cdots p_m}$ by Dirichlet's theorem on arithmetic progressions. Since $p\equiv 3\pmod 4$  we have that $p-1$ is not a square in $\mathbb{Z}$. Thus by applying Corollary 4.5 and Example 4.6 in \cite{conrad2010galois}, we have that the extension $M/\mathbb{Q}$, where $M=\mathbb{Q}(\sqrt{p+\sqrt{p}})$, is a degree $4$ extension and ${{\rm Gal}}(\tilde{M}/\mathbb{Q})\cong D_4$. Clearly $M$ is also totally real. Let $f(x)$ be the minimal polynomial of $\sqrt{p+\sqrt{p}}$ over $\mathbb{Q}$. Then the discriminant of $f(x)$ i.e. $\Delta_f$ is $256p^3(p-1)$. Due to the clever choice of $p$ we have $\Delta_f$ and $\Delta_K$ to be coprime.

 \smallskip

 \item  Since $\tilde{K}/\mathbb{Q}$ (Galois closure of $K/\mathbb{Q}$) is a finite separable extension, it contains only finitely many quadratic subextensions. Choose a real quadratic field $F/\mathbb{Q}$ that is not contained in $\tilde{K}$. By Theorem I.2.1 in \cite{jensen1982polynomials}, there exist infinitely many extensions $M/\mathbb{Q}$ containing $F$ with ${{\rm Gal}}(M/\mathbb{Q})\cong D_p$. By Remark II.2.6 in \cite{jensen1982polynomials}, since $F$ is real quadratic, we have $M$ to be totally real.\smallskip

  Now as $p$ is an odd prime, the only nontrivial normal subgroup of $D_p$ is $C_p$ and we have ${{\rm Gal}}(M/F)\cong C_p$. Thus the only nontrivial Galois subextension of $M/\mathbb{Q}$ is $F/\mathbb{Q}$. Also note that $(M\cap \tilde{K})/\mathbb{Q}$ is Galois. Thus $(M\cap \tilde{K})/\mathbb{Q}$ is either $M$ or $F$ or $\mathbb{Q}$. As $\tilde{K}$ does not contain $F$, we have $M\cap\tilde{K}=\mathbb{Q}$. Thus $M\cap K=\mathbb{Q}$. Let $L'=KM$ which is totally real by Proposition \ref{comp}. Also ${{\rm Gal}}(L'/K)\cong {{\rm Gal}}(M/\mathbb{Q}) \cong D_p$.\smallskip

Now by Lemma \ref{vanchi lemma}, there exist infinitely many cyclic Galois extensions $F'/K$ of degree $p$ where $F'$ is totally real such that $L'$ and $F'$ are linearly disjoint over $K$. Let $L=L'F'$ which is totally real. Also ${{\rm Gal}}(L/K)\cong {{\rm Gal}} (L'/K)\times {{\rm Gal}}(F'/K)\cong D_p\times C_p$.\end{enumerate}\end{proof}

         

\begin{proof}[Proof of Theorem \ref{Tot real var}]\hfill

\begin{enumerate}
\item 
The case $n=r$ follows from Lemma \ref{vanchi lemma} applied for $K/K$ and $d=n$.\smallskip

\item  Now let $r\mid n$ and $n>2r$. So $n/r >2$. We get an extension $L'/K$ with $L'$ being totally real and $[L':K]=n/r$ and ${{\rm Gal}}(\tilde{L'}/K)\cong \mathfrak S_{n/r}$ similarly as in Theorem \ref{Prim inv clus} by applying Lemma \ref{Sn} (1). Thus $r_{K}(L')=1$. This in particular solves the case $r=1$ and $n>2$.\smallskip


Now let $r\geq 2$. By Lemma \ref{vanchi lemma} applied to $\tilde{L'}/K$, we have a Galois extension $J/\mathbb{Q}$ of degree $r$ where $J$ is totally real such that $\tilde{L'}$ and $J$ are linearly disjoint over $K$. 
Now let $L=L'J$. Thus $L/K$ is obtained by strong general magnification from $L'/K$ through $J/K$ (in fact it is obtained by strong cluster magnification). Therefore by Proposition \ref{GM}, we have that $[L:K]=[L':K]\cdot [J:K]=(n/r)\cdot r=n$ and $r_{K}(L)=r_{K}(L')\cdot r_{K}(J)=1\cdot r= r$. Also $L=L'J$ is totally real by Proposition \ref{comp}.\smallskip

\item Now consider the case $n=2r$ and $r\neq 2^m$ for any $m\geq 1$. Thus there is an odd prime $p$ such that $p\mid r$. By Lemma \ref{Sn} (3), there exists an extension $M/K$ such that $M$ is totally real and ${{\rm Gal}}(M/K) \cong D_p \times C_p$. By proof of Case 5 in Section \ref{general primitive}, we have existence of a subextension $L'/K$ of $M/K$ with $[L':K]=2p$ and $r_K(L')=p$. Since $M$ is totally real, so is $L'$ by Proposition \ref{comp}. This in particular solves the case $n=2p,\ r=p$.\smallskip

Now let $r/p \geq 2$. By Lemma \ref{vanchi lemma} applied to $\tilde{L'}/K$, we have a Galois extension $J/\mathbb{Q}$ of degree $r/p$ where $J$ is totally real such that $\tilde{L'}$ and $J$ are linearly disjoint over $K$. Now let $L=L'J$ which is totally real. By Proposition \ref{GM}, $[L:K]=[L':K]\cdot [J:K]=(2p)\cdot (r/p)=2r=n$ and $r_{K}(L)=r_{K}(L')\cdot r_{K}(J)=p\cdot (r/p)= r$.

\smallskip

\item Now consider the case where $n=2r$ and $r=2^m$ for some $m\geq 1$. The special case where $n=4, r=2$ is done by applying Lemma \ref{Sn} (2). The general case is achieved by applying Lemma \ref{vanchi lemma} to the special case (similar to previous paragraph).\end{enumerate}\end{proof}

\begin{remark}
    In the above, the obtained extension $L/K$ is not primitive (hence not general primitive also) for the cases (i) $n\neq 2r$ with $1<r<n$ and (ii) $n=2r$ where $r$ is not prime.
\end{remark}

\subsection{Totally Real General Primitive Inverse Cluster Size Problem}\hfill

We can show existence of general primitive $L/K$ (hence also primitive) where $L$ is totally real number field for the following cases of $(K,n,r)$.

\begin{enumerate}
\item $(K,\ ^nP_k, k! )$ where $n>2$ and $1 \leq k \leq n-2$. In particular $(K, n, 1)$ where $n>2$.\smallskip

\item $(K, 2n, 2)$ where $n\geq 2$.
\smallskip


\smallskip

\item $(K, 4 {n\choose k}, 4)$ where $n>2$ and $1<k<n-1$.

\smallskip

\item $(K,2p,p)$ where $p>2$ is an odd prime.

\end{enumerate}

\begin{proof}
Follows from Lemma \ref{Sn} and proofs of Cases (1), (2) and (5) in Section \ref{general primitive}.     
\end{proof}

\subsection{Totally Real Primitive Inverse Cluster Size Problem}\hfill \medskip


We show existence of primitive $L/K$ where $L$ is totally real number field for the following case. \smallskip

\textbf{Case}: $(K,n,r)$ where $K$ is any totally real number field and $n$ is odd and $1<r<n$.

\begin{lemma}\label{odd solv lemma}
Let $K$ be a totally real number field and let $G$ be a group of odd order. Then there exists a Galois extension $M/K$ where $M$ is totally real such that ${{\rm Gal}}(M/K)\cong G$.
\end{lemma}

\begin{proof}
We know by Feit-Thompson theorem that $G$ is solvable as $G$ has odd order. Therefore by Lemma \ref{lemma 6.6} applied to the extension $K/\mathbb{Q}$, we have a Galois extension $L'/\mathbb{Q}$ such that ${{\rm Gal}}(L'/\mathbb{Q})=G$ and $L'\cap K=\mathbb{Q}$.\smallskip

The proof of $L'$ being totally real is essentially the proof of Proposition 5.2.2 in \cite{kalyanswamy2012inverse} but we provide it nonetheless. As $L'/\mathbb{Q}$ is Galois, then when regarding $L'$ as a subfield of $\mathbb{C}$, embeddings of $L'$ are given by elements of ${{\rm Gal}}(L'/\mathbb{Q})$. If $L'$ is not totally real then complex conjugation would be an embedding of $L'$ into $\mathbb{C}$ and hence an even order element of the Galois group which is itself of odd order and this contradicts Lagrange's theorem.\smallskip

Now let $M=KL'$. As $L'\cap K=\mathbb{Q}$, so ${{\rm Gal}}(M/K)={{\rm Gal}}(KL'/K)\cong {{\rm Gal}}(L'/\mathbb{Q})=G$. Also $M=KL'$ is totally real by Proposition \ref{comp}.\end{proof}

\begin{proof}[Proof of the Case]

Now $r>1$. Consider the solvable group $G=(\Z/r\Z)^s \rtimes \Z / s\Z$ in the proof of Theorem \ref{Prim inv clus}. The action of this group is transitive on $n$ points, and a point stabiliser fixes precisely $r$ points. Any point stabiliser $H$ is isomorphic to $(\Z/r\Z)^{s-1}$. Now $n=rs$ and $|G|=r^s s$. So $n$ is odd if and only if $|G|$ is odd. By applying Lemma \ref{odd solv lemma} for this $G$ when $n$ is odd, we get a Galois extension $M/K$ where $M$ is totally real such that ${{\rm Gal}}(M/K)\cong G$. Let  $L=M^H$. So $[L:K]=n$ and $r_K(L)=r$. Also $L$ is totally real by Proposition \ref{comp} and also $L/K$ is primitive by proof of Theorem \ref{Prim inv clus}.\end{proof}






\section{Conflict of interest statement}
On behalf of all authors, the corresponding author states that there is no conflict of interest.

\section{Data availability statement}

\emph{Our manuscript has no associated data.}

\bigskip

\noindent {\it Acknowledgements:} The authors would like to thank Chennai Mathematical Institute and Institute of Mathematical Sciences, Chennai for providing an excellent environment where a part of this research work was carried out. The first author would also like to acknowlege P Narayanan for suggesting to consider the totally real variant of the problem. \medskip

\bibliographystyle{plain}
 \bibliography{mybib}

\begin{thebibliography}{10}

\bibitem{Bhagwat_2025}
Chandrasheel Bhagwat and Shubham Jaiswal.
\newblock Cluster magnification, root capacity, unique chains, base change and ascending index.
\newblock {\em Proc. Indian Acad. Sci. (Math. Sci.) 135:19}, 2025.

\bibitem{vanchinathan2023exceptional}
Aruna C and P~Vanchinathan.
\newblock Exceptional quartics are ubiquitous.
\newblock {\em Accepted for publication in Proc. Indian Acad. Sci. (Math. Sci.), https://arxiv.org/abs/2306.17556}, 2023.

\bibitem{conrad2010galois}
Keith Conrad.
\newblock {Galois groups of cubics and quartics (not in characteristic 2)}.
\newblock {\em https://kconrad.math.uconn.edu/blurbs/galoistheory/cubicquartic.pdf}, 2010.

\bibitem{conrad2023galois}
Keith Conrad.
\newblock The {Galois} correspondence at work.
\newblock {\em \\ https://kconrad.math.uconn.edu/blurbs/galoistheory/galoiscorrthms.pdf}, 2023.

\bibitem{jaiswal2025rootcapacityintersectionindicium}
Shubham Jaiswal.
\newblock {On Root Capacity, Intersection Indicium, Minimal Generating Sets of Galois Closure \& Compositum Feasible Triplets. arXiv preprint}.
\newblock {\em https://arxiv.org/abs/2505.20238}, 2025.

\bibitem{jensen1982polynomials}
Christian~U Jensen and Noriko Yui.
\newblock {Polynomials with Dp as Galois group}.
\newblock {\em Journal of Number Theory}, 15(3):347--375, 1982.

\bibitem{kalyanswamy2012inverse}
Sudesh Kalyanswamy.
\newblock {Inverse Galois problem for totally real number fields}.
\newblock {\em Cornell University Mathematics Department Senior Thesis}, 2012.

\bibitem{kedlaya2012construction}
Kiran Kedlaya.
\newblock A construction of polynomials with squarefree discriminants.
\newblock {\em Proceedings of the American Mathematical Society}, 140(9):3025--3033, 2012.

\bibitem{krithika2023root}
M~Krithika and P~Vanchinathan.
\newblock An elementary problem in {Galois} theory about the roots of irreducible polynomials.
\newblock {\em Proc. Indian Acad. Sci. (Math. Sci.) 134:28}, 2024.

\bibitem{krithika2025inflated}
M~Krithika and P~Vanchinathan.
\newblock {Inflated G-Extensions for Algebraic Number Fields. arXiv preprint}.
\newblock {\em Accepted for publication in Czechoslovak Mathematical Journal, https://arxiv.org/abs/2503.23946}, 2025.

\bibitem{perlisroots}
Alexander~R Perlis.
\newblock Roots appear in quanta: exercise solutions.
\newblock {\em \\ https://www.math.lsu.edu/~aperlis/publications/rootsinquanta/}, 2003.

\bibitem{perlis2004roots}
Alexander~R Perlis.
\newblock Roots appear in quanta.
\newblock {\em The American Mathematical Monthly}, 111(1):61--63, 2004.

\bibitem{volklein1996groups}
Helmut V{\"o}lklein.
\newblock {\em Groups as {Galois} groups: an introduction}.
\newblock Number~53. Cambridge University Press, 1996.

\bibitem{zywina2023modular}
David Zywina.
\newblock Modular forms and some cases of the {Inverse Galois Problem}.
\newblock {\em Canadian Mathematical Bulletin}, 66(2):568--586, 2023.

\end{thebibliography}

\end{document}